\newtheorem{thm}{Theorem}[section]
\newtheorem{prop}[thm]{Proposition}
\newtheorem{cor}[thm]{Corollary}
\newtheorem{lemma}[thm]{Lemma}
\newtheorem*{introthmA}{Theorem A}
\newtheorem*{introthmB}{Theorem B}
\newtheorem*{introthmE}{Theorem E}
\newtheorem*{introthmF}{Theorem F}
\theoremstyle{definition}
\newtheorem{defn}[thm]{Definition}
\theoremstyle{remark}
\newtheorem{remark}[thm]{Remark}
\newtheorem{e.g.}[thm]{Example}
\newtheorem{notrem}[thm]{Notational remark}
\newcommand{\lvC}{{\mathbb C}}
\newcommand{\lvP}{{\mathbb P}}
\newcommand{\lvR}{{\mathbb R}}
\newcommand{\lvZ}{{\mathbb Z}}
\newcommand{\sA}{\mathscr{A}}
\newcommand{\sC}{\mathscr{C}}
\newcommand{\sE}{\mathscr{E}}
\newcommand{\sF}{\mathscr{F}}
\newcommand{\sH}{\mathscr{H}}
\newcommand{\sP}{\mathscr{P}}
\newcommand{\sO}{\mathscr{O}}
\newcommand{\sU}{\mathscr{U}}
\newcommand{\vect}{\mathrm{Vect}}
\newcommand{\Rep}{{\mathrm{Rep}}}
\renewcommand{\Re}{{\mathfrak{R}}\mathfrak{e}\;}
\newcommand{\un}{\rm un}
\newcommand{\iso}{\stackrel{\sim}{\rightarrow}}
\newcommand{\ora}{\overrightarrow}
\newcommand{\ol}{\overline}
\newcommand{\Pomt}{\lvP^1 \setminus \{0,1,\infty\}}
\newcommand{\Aut}{\mathrm{Aut}}
\newcommand{\Res}{\mathrm{Res}}
\newcommand{\Li}{\mathrm{Li}}
\newcommand{\uni}{\mathrm{un}}
\newcommand{\ups}{\upsilon}
\newcommand{\Cs}{{\lvC\ll\! \!X_0,X_1\!\! \gg}}
\begin{document}

\title{On an extension of the universal monodromy representation for  $\mathbb{P}^1\backslash\{0,1,\infty\}$}
\author{$\;\;$Sheldon T Joyner
\\
\tiny{Brandeis University}
}
\maketitle

\begin{abstract} The {\sc Chen} series map giving the universal monodromy representation of $\Pomt$ is extended to an injective 1-cocycle of $PSL(2, \lvZ)$ into power series with complex coefficients in two non-commuting variables, twisted by an action of $S_3.$ The definition of the 1-cocycle is effected by parallel  transport of flat sections of the bundle, also with an $S_3$ twisting, along paths in $\Pomt$ which are explicitly associated to elements of $PSL(2, \lvZ)$. 
The resulting action of $PSL(2, \lvZ)$ on the polylogarithm generating function is shown to yield a family of proofs of the analytic continuation and functional equation of the {\sc Riemann} zeta function.
\end{abstract}

\tableofcontents

\section*{Introduction}
As is well-known, the monodromy representation corresponding to the universal prounipotent bundle $\sU$ with connection $\nabla$ on $\Pomt$ (cf. \S 1 of \cite{Kim:UAM}) may be described by means of the Chen series map on homotopy classes of paths $[\gamma] \in \pi_1(\Pomt, c)$:
\begin{equation}
\label{e:Chen}
[\gamma] \mapsto
\sum_{w}\int_{\gamma}\omega_{i_1} \ldots \omega_{i_k}X_{i_1} \ldots X_{i_k}
\end{equation}
where the sum is taken over all words in the non-commuting formal variables $X_0$ and $X_1$ (including the empty word, for which the corresponding integral is 1), $c$ is any (possibly tangential) basepoint,  and if $z$ denotes the usual parameter on $\Pomt,$ 
$\omega_{0} = \frac{dz}{z}$ while 
$\omega_{1} = \frac{dz}{1-z}$
(see Proposition 11 in \cite{Hain:Lec}). When $c$ is the tangential basepoint $\ora{01}$ and $\gamma$ is a path from $c$ to $z$ which does not cut the real axis unless $z$ is real, the series which results 
is called the polylogarithm generating series, and is denoted $\Li(z, X_0,X_1)$ or $\Li(z)$ for short. Here, the integrals which appear are regularized in the usual way - cf. \cite{Joyner:ii}. For such $\gamma$, the coefficients of the terms of the form of $X_1X_0^{n_1}X_1X_0^{n_1}\cdots X_1 X_0^{n_r}$ are the multiple polylogarithm functions.

Because the bundle $\sU$ is given by
\[
\sU = \lim_{\leftarrow_N} \left[
\frac{\lvC\!<\!X_0,X_1\!>}{(X_0,X_1)^{N+1}} \otimes \sO_{\Pomt} \right]
\]
and $\nabla$ is the formal {\sc Knizhnik-Zamolodchikov} connection 
\[
\nabla = d -\left(\frac{dz}{z}X_0 + \frac{dz}{1-z}X_1\right),
\]
one verifies without difficulty that $\Li(z)$ is a flat section of $(\sU, \nabla)$.

Here we prove:
\begin{introthmA}
The monodromy representation
\[
F_{\bullet}:\pi_1(\Pomt, \ora{01}) \rightarrow \lvC\!\ll \! X_0,X_1\!\gg^{\times}
\]
admits an extension to an injective 1-cocycle
\[
F_{\bullet}: PSL(2, \lvZ) \rightarrow \lvC\!\ll \!X_0,X_1\!\gg_{\Lambda}^{\times}
.
\]
\end{introthmA}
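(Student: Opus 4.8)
The plan is to construct $F_\bullet$ from the path-space realization of $PSL(2,\lvZ)$ and then extract both the cocycle identity and injectivity from standard properties of {\sc Chen}'s iterated integrals. I would work with the presentation $PSL(2,\lvZ) = \langle S, T \mid S^2 = (ST)^3 = 1\rangle$, in which $S$ is the involutive generator and $T$ the free generator. To each of $S$ and $T$ the path-space realization attaches an explicit homotopy class of path on $\Pomt$ between the relevant tangential basepoints, and I set $F_S$ and $F_T$ equal to the regularized {\sc Chen} series (the parallel transport of $\nabla$) along these paths; by construction $F_S$ is the asserted transform of the {\sc Drinfel'd} associator $\Phi(X_0,X_1)$ and $F_T$ the $R$-matrix. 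Writing $\sigma_g$ for the action of (the image in $SL(2,\lvZ/2\lvZ)$ of) $g$ on the power-series ring, the value $F_g$ on a general word in $S,T$ is then defined by concatenating the constituent paths after applying the appropriate $\sigma$-twists.

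The cocycle identity $F_{gh} = F_g \cdot \sigma_g(F_h)$ is the generating-series form of the multiplicativity of parallel transport under concatenation: the {\sc Chen} series of a concatenated path is the product of the series of its pieces, and repositioning the second piece is recorded exactly by the $SL(2,\lvZ/2\lvZ)$-action on $X_0,X_1$, which pulls back $\omega_0,\omega_1$ under the coordinate change permuting the three punctures. The content of the extension is therefore the \emph{well-definedness}: the generator-values must respect the defining relations. Since $S$ interchanges the punctures $0$ and $1$, i.e.\ $\sigma_S$ swaps $X_0$ and $X_1$, the relation $S^2=1$ reduces to $F_S \cdot \sigma_S(F_S) = 1$, which is precisely the inversion (two-cycle) relation $\Phi(X_0,X_1)\Phi(X_1,X_0)=1$ of the associator; and expanding $F_{(ST)^3}=1$ via the cocycle rule produces an alternating, six-fold $\sigma$-twisted product of $F_S$ and $F_T$ that collapses to $1$ exactly by {\sc Drinfel'd}'s hexagon identities relating $\Phi$ and the $R$-matrix. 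These are the relations guaranteeing that the universal {\sc Knizhnik-Zamolodchikov} connection descends consistently around all three punctures.

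For injectivity I would first treat $\pi_1(\Pomt) = \ker\big(PSL(2,\lvZ)\to SL(2,\lvZ/2\lvZ)\big)$, which is free on two generators. On it $F_\bullet$ is the classical {\sc Chen}/monodromy map, and it is injective because it realizes the prounipotent (Malcev) completion of a free group, which embeds into the group-like elements of $\lvC\!<\!<X_0,X_1\!>\!>$. Consequently $g \mapsto (F_g, \bar g)$ is an injective homomorphism into the semidirect product $\lvC\!<\!<X_0,X_1\!>\!>^{\times}\rtimes SL(2,\lvZ/2\lvZ)$, so it remains only to recover the coset $\bar g$ from the series $F_g$ itself. Here I would invoke the explicit computation of {\sc Okuda} and {\sc Ueno}: the six coset-images are pairwise distinct, and the regularized leading asymptotics of $F_g$ at the endpoints of its associated path — data depending only on $\bar g$, since left multiplication by a group-like loop-holonomy $F_\gamma$ ($\gamma\in\pi_1$) does not move the endpoints — recover the tangential basepoints, hence $\bar g$; injectivity of $F_\bullet$ then follows from that of $g\mapsto(F_g,\bar g)$.

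The main obstacle is the well-definedness in the second step, namely the verification that $(ST)^3 = 1$ is respected: collapsing the six-fold twisted product of associator-transforms and $R$-matrices to $1$ is exactly where the full force of {\sc Drinfel'd}'s hexagon relations — and the compatibility of $\Phi$ with the permutation action on the punctures — must be brought to bear, and it is this step that ties the construction to the quasi-triangular quasi-{\sc Hopf} structure. The coset-separation needed for injectivity is a secondary difficulty, comparatively soft once the low-degree terms of $F_S$ and $F_T$ and the endpoint regularizations are made explicit.
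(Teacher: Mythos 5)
Your outline is correct in broad strokes, but it diverges from the paper at exactly the two places where the real work happens, and in both places it shifts the burden onto steps you have not fully supplied. On the extension: you define $F$ on the generators and extend by the cocycle rule, so you must verify that $S^2=(ST)^3=1$ are respected, which you propose to do by citing the duality and hexagon relations of $\Phi_{KZ}$. The paper goes in the opposite direction: it first realizes $PSL(2,\lvZ)$ as an honest group of homotopy classes of paths $G_{\ora{01}}$ under an $SL(2,\lvZ/2\lvZ)$-twisted concatenation (Theorems \ref{t:top} and \ref{t:gs}, Corollary \ref{c:ISO}), then defines $F_\alpha$ on the whole group at once as the twisted {\sc Chen} series of $\Psi^{-1}\alpha$; the relations are then automatic, the cocycle identity is the coproduct formula for iterated integrals plus Lemma \ref{l:pullback}, and the hexagon relations emerge as a \emph{consequence} of the braid relation (see the remark after Proposition \ref{p:OU}) rather than being consumed as input. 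Your route is legitimate if you take {\sc Drinfel'd}'s relations as given, but note a sign error: $\ol\sigma$ acts by $(X_0,X_1)\mapsto(-X_1,-X_0)$, not by swapping $X_0$ and $X_1$, so the two-cycle relation you need is $\Phi_{KZ}(X_0,X_1)\Phi_{KZ}(-X_1,-X_0)=1$ as in (\ref{e:inv.Phi}), not $\Phi(X_0,X_1)\Phi(X_1,X_0)=1$.

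On injectivity there is a genuine gap. The paper's proof (Theorem \ref{t:faithful}) is one step: for a cocycle, injectivity reduces to $F_\alpha=1\Rightarrow\alpha=1$, and $F_\alpha=1$ forces every iterated integral along $\Psi^{-1}\alpha$ to vanish, so the path is trivial by {\sc Chen}'s $\pi_1$ {\sc De Rham} theorem; no coset separation is needed. Your step (ii) is not well posed as written: $F_g$ is a bare formal power series with no ``endpoints,'' so ``the regularized leading asymptotics of $F_g$ at the endpoints of its associated path'' is not an invariant you can read off the series. What actually separates the cosets is a low-degree coefficient computation: when $\ol g$ involves $\ol\rho$ an odd number of times the linear term of $F_g$ is an odd multiple of $i\pi$ plus integer multiples of $2\pi i$ applied to $X_0$ and $X_1$, hence nonzero; when $\ol g=\ol\sigma$ the coefficient of $X_1X_0$ is $\pm\zeta(2)$ plus an integer multiple of $(2\pi i)^2$, hence nonzero since $\pi^2/6\notin 4\pi^2\lvZ$; and so on. This is precisely the analysis the paper performs later in Theorem \ref{t:noPhi} and Lemma \ref{l:DA}, and it is not ``comparatively soft'' --- you would have to carry it out, although only separation of the identity coset is actually required, since kernel-triviality suffices for injectivity of a cocycle.
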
 
(See \ref{t:PSL.act} and \ref{t:cocycle} below.) 

 $\lvC\!\ll \! X_0,X_1\!\gg$ denotes the algebra of power series with complex coefficients in 
$X_0$ and $X_1$, and 
$\lvC\!\ll \! X_0,X_1\!\gg_{\Lambda}^{\times}$ denotes invertible power series with an action of $PSL(2, \lvZ)$ which factors through $\Lambda \simeq S_3$ via the usual surjection $\sA$ of (\ref{e:ses}) below. The $S_3$ action on power series is induced by the action of the group of automorphisms $\Lambda$ of $\Pomt$ on the connection $\nabla$, and was given in (25) of \cite{OU}. It is also described in \S \ref{sss:top} below. 

The existence of this extension is facilitated by the following short exact sequence:
\begin{equation}
\label{e:ses}
1\rightarrow \Gamma(2)/\{\pm 1\} \simeq \pi_1(\Pomt,c)
\rightarrow PSL(2, \lvZ) \stackrel{\sA}{\rightarrow} 
SL(2, \lvZ/2\lvZ)
\rightarrow 1.
\end{equation}
First, we give an explicit lifting of 
the fundamental group $PSL(2, \lvZ)$ of the orbifold $\left[ PSL(2, \lvZ)\backslash H\right]$ (where $H$ is the upper half plane) to a space of homotopy classes of paths in $\Pomt$. Then, parallel transport along these paths, twisted by the above-mentioned action of $S_3$, gives an action of $PSL(2, \lvZ)$ on sections of $(\sU, \nabla)$. Where the flat section $\Li(z)$ is concerned, we prove in Proposition \ref{t:PSL.act} below that this action amounts to multiplying the section by a power series. This power series gives the extension of the monodromy representation to a 1-cocycle on $PSL(2, \lvZ)$, and is given by the formula: 
\[
\alpha \;\;\mapsto \;\;\left.
\sum_{w}\int_{\alpha}\omega_{i_1} \ldots \omega_{i_k}Y_{i_1}\ldots Y_{i_k}\right|_{Y_{i_j} = \ol\alpha X_{i_j},}
\]
with sum and $\omega_{i_j}$ notation as above, writing $\ol\alpha$ for the reduction of $\alpha \in PSL(2, \lvZ)$ to $SL(2, \lvZ/2\lvZ)\simeq S_3,$ and $\ol\alpha X_{i_j}$ for the action of $\ol\alpha \in S_3$ on $X_{i_j}.$ 

The proof of the injectivity rests on {\sc Chen's} $\pi_1$ {\sc DeRham} Theorem, (cf. Theorem 10 of \cite{Hain:Lec}).

Extending the monodromy representation to $PSL(2, \lvZ)$ yields an additional symmetry on $\Li(z)$ which can be used to prove the analytic continuation of {\sc Riemann's} zeta function $\zeta(s)$.  
This allows us to draw parallels between 
the classical theta function technique used to prove the  analytic continuation and functional equation of $\zeta(s)$, and {\sc Riemann's} original contour integral approach. As {\sc Hecke} noticed in \cite{Hecke}, the following two facts comprise the essence of the theta function proof:
 \\
 {\bf  T0.} The {\sc Jacobi} theta function $\theta(\tau, z)$ is modular in $\tau$ in the usual sense, with respect to the congruence subgroup $\Gamma(2)$ of $PSL(2, \lvZ)$. (This explains the existence of the {\sc Fourier} series expansion for $\theta(\tau,z)$.)
 \\
 {\bf T1.} $\theta(\tau,z)$ satisfies an additional symmetry property with respect to the involutive generator $\sigma$ of $PSL(2, \lvZ)$ (given by ${\sigma}: \tau \mapsto -1/\tau$ in the action on $H$), namely the functional equation of $\theta(\tau,z)$, which is regarded as an additional modularity property in $\tau$.
  
Here, we show that {\sc Riemann's} contour integral expression for $\zeta(s)$ fits into the context of a family of integral expressions, each of which may be used to prove the analytic continuation and functional equation for $\zeta(s).$ Taken together, these proofs result from the following facts:
\\
{\bf P0.} The monodromy of the polylogarithm generating function $\Li(z)$ may be calculated (as for example in \cite{MPvdH}) by directly performing the analytic continuation along the paths of the fundamental group of $\Pomt$. The equations which result may be thought of as transformation rules for $\Li(z)$ with respect to elements of  $\Gamma(2)/\{\pm1\} \simeq \pi_1( \Pomt, c)$ (where $c$ is any basepoint - possibly tangential). 
\\
{\bf P1.} $\Li(z)$ satisfies an additional symmetry property with respect to ${\sigma} \in PSL(2, \lvZ)$, namely a functional equation involving the {\sc Drinfel'd} associator. Coinciding as it does with the action on $\Li(z)$ of an element of $S_3$ (cf. \S 3 of \cite{OU}), this symmetry property is well-known,
but here it is 
shown to arise from extending a universal monodromy representation of the fundamental group of $\Pomt$, to a 1-cocycle on $PSL(2, \lvZ)$.

Property P1 facilitates the analytic continuation in that it gives rise to the {\sc Euler} connection formulae (see Proposition 5 in \cite{OU}), which allow us to avoid non-integrable monodromy terms by shifting monodromy of the integrands from $0\in \lvC$ to $\infty$ - for the details see \S \ref{s:AC}.

\section*{Acknowledgements}
The author is glad of the chance to express his thanks to 
{\sc Minhyong Kim} for his unwavering encouragement and patient explanations, as well as for sharing his insights.

\section{The extension of the monodromy representation}
\subsection{Explicit lifting of $PSL(2, \lvZ)$ to classes of paths in $\Pomt$}
\label{sss:top}

Suppose that $X = \overline{X} \setminus S$ is a smooth curve over $\lvC$ where $S$ is some finite set of points. In \cite{Deligne:89}, {\sc Deligne} introduced a notion of fundamental group of $X$ based at any given omitted point $a \in S$, in the direction of some specified tangent vector to $\overline{X}$ at $a$.
Classically, as in \cite{Hain:CP} such fundamental groups with {\em tangential basepoint} may be defined as follows:  If $\vec{v_j} \in T_{a_j}$ is a tangent vector at $a_j \in S$ for $j=0,1$,  set
\[
P_{\vec{v_0},\vec{v_1}}:=\{ \gamma :[0,1] \rightarrow \overline{X}| \gamma'(0) = \vec{v_0}, \gamma'(1) = -\vec{v_1}, \gamma((0,1)) \subset X\}.
\]
\begin{defn}
The fundamental path space 
$\pi_1(X, \vec{v_0}, \vec{v_1})$ is the set of path components of $P_{\vec{v_0},\vec{v_1}}.$
\newline When $\vec{v_1}=\vec{v_0},$ this is the fundamental group denoted $\pi_1(X,\vec{v_0}).$
\end{defn}

This naive description is sufficient for the use of the paper. For our purposes, $\overline{X} = \lvP^{1}_{\lvC}$, $S=\{0,1,\infty\}$, and 
$\ora{ab}$ will denote the tangent vector of unit length over $\overline{X}$ at $a \in S,$ pointing in the direction of $b \in S$ for any $b \neq a$.

\begin{defn}
Any fundamental path space of the form of 
\[
\pi_{1}(\Pomt, \ora{a_0b_0}, \ora{a_1b_1})
\]
where $a_j, b_{j} \in \{0, 1,\infty\}$ and $a_{j} \neq b_{j}$ for $j=0,1$ will be called a {\em real-based fundamental path space} of $\Pomt,$ and the tangential basepoints $\ora{a_jb_j}$ will be referred to as real tangential basepoints.
\end{defn}

Fix a real tangential basepoint $\ora{ab}.$ Then form the set 
\[
G_{\ora{ab}}:=\cup \pi_1(\Pomt, \ora{ab}, \ora{a_0b_0})
\]
where the union is taken over all $a_0,b_0 \in \{0,1,\infty\}$ with $a_0 \neq b_0.$ The utility of restricting attention to the real tangential basepoints lies in the fact that they admit an action of $SL(2,\lvZ/2\lvZ)$ (see below). Using this action, $G_{\ora{ab}}$ will be endowed with a group structure, by means of which it can be identified with $PSL(2,\lvZ).$

Now as is described in \cite{Chandra},  the symmetries of the classical $\lambda$ function effecting the covering of $\Pomt$ by $H$ are captured by the classical anharmonic group $\Lambda$, to which $SL(2, \lvZ/2\lvZ)$ is isomorphic. $\Lambda$ is given explicitly as the following group of linear fractional automorphisms of $\Pomt$:
\[
\Lambda = \left\{\lambda \mapsto\lambda, \lambda \mapsto1-\lambda, \lambda \mapsto\frac{\lambda}{\lambda-1}, \lambda \mapsto\frac{1}{\lambda},\lambda \mapsto \frac{\lambda-1}{\lambda}, \lambda \mapsto\frac{1}{1-\lambda}
\right\}.
\] 
It is evident from the topology that $\Lambda$ is exactly the group of {\em all} such linear fractional automorphisms of $\Pomt$. 

Note that these transformations necessarily permute the real tangential basepoints, as is also immediate from the above explicit description. In fact, the elements of $\Lambda$ are characterized by the corresponding permutations of the symbols $0$, $1$ and $\infty$ so that also $\Lambda  \simeq S_3$.

Once and for all fix isomorphisms
\begin{equation}
\label{e:red.gp}
SL(2, \lvZ/2\lvZ) \simeq \Lambda \simeq S_3
\simeq
< \ol{\sigma}, \ol{\rho} | \ol{\sigma}^2 = \ol\rho ^2=1;
\ol\sigma \ol\rho \ol\sigma
=
\ol\rho \ol\sigma \ol\rho>
\end{equation}
by identifying the respective generators
\[
\left[ \begin{array}{cc}
0&1\\
1&0\end{array}\right]
\leftrightarrow
(\lambda \mapsto 1-\lambda)
\leftrightarrow
(01)
\leftrightarrow
\ol\sigma
\]
and
\[
\left[\begin{array}{cc}
 1&1\\
 0&1
 \end{array}
 \right]
 \leftrightarrow
 (\lambda \mapsto \frac{\lambda}{\lambda-1})
 \leftrightarrow
 (1\infty)
 \leftrightarrow
 \ol\rho.
 \]

Now suppose given the real tangential basepoint $\ora{ab} = \ora{01}.$ Then let $s$ denote the homotopy class of paths in $\Pomt$ represented by the tangential path $[0,1]$ and let $r$ be the homotopy class of paths represented by the loop from $\ora{01}$ to $\ora{0\infty}$ in the upper half plane, as pictured below.

\begin{center}
\begin{tikzpicture}
\filldraw (0,0) node [below=1pt] {0} circle (2pt);
\filldraw (2,0) node [below=1pt] {1} circle (2pt);
\filldraw (-3,0) node[below=1pt] {$\infty$} circle (2pt);
\draw (0,0) --node [above = 1pt] {$s$} node {$>$}  (2,0);
\end{tikzpicture}
\end{center}

\begin{center}
\begin{tikzpicture}
\draw (0,0) .. controls (1, 0) and (1.25,.25) .. (1,.5);
\draw (1,.5) .. controls (.75,.75) and (-.75,.75) .. node {$<$} node [above = 1pt] {$r$} (-1,.5);
\draw (-1,.5) .. controls (-1.25,0.25) and (-1,0) .. (0,0);
\filldraw (0,0) node [below=1pt] {0} circle (2pt);
\filldraw (2,0) node [below=1pt] {1} circle (2pt);
\filldraw (-3,0) node[below=1pt] {$\infty$} circle (2pt);
\end{tikzpicture}
\end{center}

The use of tangential basepoints prevents homotopies which would otherwise occur - in particular, the homotopy classes can detect an {\em upper half plane} owing to the rigidity of the real line with respect to a choice of a pair of real tangential basepoints. In this way, one sees that $r$ is well-defined as a homotopy class of paths which differs from the class of a similar loop in the lower half plane.

The group structure on $G_{\ora{ab}}$ is facilitated by the distinct presentations of $SL(2, \lvZ/2\lvZ)$ coming from (\ref{e:red.gp}): Firstly we define the surjection $[ \; \cdot \;]_{ab}$ of $G_{\ora{ab}}$ onto $SL(2, \lvZ/2\lvZ) \simeq S_3$ by sending a given homotopy class $t$ in $G_{\ora{ab}}$ with endpoint $\ora{a_tb_t}$, to the permutation $[t]_{ab}$ of $\{0,1,\infty\}$ sending $a$ to $a_t$ and $b$ to $b_t$. 
Next, we exploit the fact that the fractional linear automorphisms $\Lambda$ are also isomorphic to $SL(2, \lvZ/2\lvZ)$ to define an action of this group on $G_{\ora{ab}}:$ Any $\ol\alpha\in \Lambda$ is a self-mapping of $\Pomt$ and as such sends any homotopy class $u$ of paths between real tangential basepoints, to some other such  homotopy class of paths. We denote the latter by $\ol\alpha * u$.

Sythesizing these definitions, we have a map of $G_{\ora{ab}}\times G_{\ora{ab}}$ into $G_{\ora{ab}}$ given by 
\[
(t,u) \mapsto [t]_{ab}*u.
\]


\begin{remark}
\label{r:rs}
When $\ora{ab} = \ora{01}$ we write $[\;\cdot\;]$ for $[\;\cdot\;]_{01}$. Then notice that, viewed as linear fractional transformations of $\Pomt$, 
\[
[r]:z \mapsto \frac{z}{z-1}
\] 
while 
\[
[s]:z\mapsto 1-z.
\] 

Furthermore, for any $t \in G_{\ora{01}}$ with endpoint $\ora{a_1b_1}$, one checks by direct computation that $[t]*r$ may be represented by a loop in the upper or lower half plane (according to the corresponding permutation $[t]$ being even or odd respectively), beginning at $\ora{a_1b_1}$ and ending at $\ora{a_1c_1}$ where $c_1 \neq b_1,$ while $[t]*s$ may be represented by a straight line segment beginning at $\ora{a_1b_1}$ and ending at $\ora{b_1a_1}$.
\end{remark}

Using this action, we define a concatenation procedure for homotopy classes of paths in $G_{\ora{01}}$ according to the following inductive prescription: 
If $\eta$ is a homotopy class of paths formed from the concatenation procedure applied successively to classes in $\{r,s\}$, and $\nu$ is either $r$ or $s$, let $\eta \nu$ be the homotopy class of $\eta$ followed by $[\eta]*\nu$.
Since $[\eta]$ sends $\nu$ to a homotopy class of paths originating at the endpoint of the paths in $\eta,$ it follows that $\eta\nu \in G_{\ora{01}}$.

The construction may be repeated for any choice of real tangential basepoint $\ora{ab}.$ In cases other than $\ora{ab} = \ora{01}$ write $r_{ab}$ and $s_{ab}$ for the corresponding generators. To be precise, $r_{ab}$ is a loop based at $\ora{ab}$ of the form of $r$ as above, which is in the upper half plane for $\ora{ab} = \ora{\infty 0}$ and $\ora{ab} = \ora{1\infty}$ but in the lower half plane when $\ora{ab}$ is $\ora{10},$ $\ora{0\infty}$, or $\ora{\infty 1};$ while $s_{ab}$ is a straight line segment from $\ora{ab}$ to $\ora{ba}.$ 

Throughout write $\cdot$ for concatenation of (homotopy classes of) paths.

\begin{defn}
The mapping 
\[
S_{ab}:
G_{\ora{ab}} \times G_{\ora{ab}} 
\rightarrow G_{\ora{ab}}
\]
with
\[
S_{ab}(\eta, \mu)= \eta\mu := \eta \cdot ([\eta]_{ab}*\mu)
\]
for any $\eta, \mu\in G_{\ora{ab}}$, 
will be referred to as $SL(2,\lvZ/2\lvZ)$ concatenation of tangential paths in $G_{\ora{ab}}.$
\end{defn}

One checks that for any $\eta, \mu \in G_{\ora{ab}},$
\begin{equation}
\label{e:aux0}
[\eta \mu] = [\eta] \circ [\mu].
\end{equation}
Using this fact, one readily proves the associativity of successive application of $S_{ab}$: I.e., for any $\eta, \mu$ and $\nu$ in $G_{\ora{ab}},$
\[
S_{ab}(\eta ,S_{ab}(\mu, \nu)) = S_{ab}(S_{ab}(\eta,\mu), \nu).
\]

Because of the associativity, for any $n \geq 1,$ the $SL(2, \lvZ/2\lvZ)$ concatenation $\nu_1 \ldots\nu_n$ of elements $\nu_j \in \{r,s\}$ is uniquely determined. It is given by
\[
\nu_1 \cdot ([\nu_1]*\nu_2 )\cdot
([\nu_1 \cdot [\nu_1]*\nu_2] * \nu_3) 
\cdot \ldots \cdot
([\nu_1\cdot [\nu_1]*\nu_2\cdot \ldots \cdot  [\ldots [[\nu_1 \cdot [\nu_1]*\nu_2]*\nu_3]\ldots]*\nu_{n-1}]*\nu_n),
\]
where $\cdot$ again denotes concatenation of (homotopy classes of) paths. 
Applying (\ref{e:aux0}) iteratively,
one sees that for any $m \leq n$, $$[\nu_1 \ldots \nu_m] = [\nu_1] \circ \ldots \circ [\nu_m],
$$ so $\nu_1 \ldots \nu_n$ may be rewritten
\[
\nu_1 \cdot ([\nu_1]*\nu_2)
\cdot (([\nu_1] \circ [\nu_2])*\nu_3)
\cdots (
([\nu_1] \circ \ldots \circ [\nu_{n-1}])*\nu_n).
\]

Now it is possible to show that for any real tangential basepoint $\ora{ab},$ $G_{\ora{ab}}$ may be endowed with a group structure with multiplication given by $S_{ab}.$  To simplify the notation, consider the case of $\ora{ab} = \ora{01}.$ Begin by observing that the class $e$ of the trivial path acts as the identity. Also, $s$ is its own inverse,  since $[s]*s$ is the homotopy class of paths represented by the tangential path $[1,0],$ which is inverse to $[0,1]$. The inverse of $r$ is the homotopy class $q$ of paths represented by the loop from $\ora{01}$ to $\ora{0\infty}$ in the {\em lower} half plane - one checks easily that $rq=qr=e.$ We write $q=r^{-1}.$ 
Of course $[r]= [r^{-1}].$ 

With the group structure induced in this way,  it is easy to prove that 
\[
G_{\ora{ab}} \simeq <\!r_{ab},s_{ab}\!>\!\!/(s_{ab}^2, (s_{ab}r_{ab})^3),
\]
where $<\!r_{ab},s_{ab}\!>=F_2$ denotes the free group on the two generators $r_{ab}$ and $s_{ab}$.

Now it is a well-known fact that
\[
PSL(2,\lvZ) = <\rho, \sigma>/(\sigma^2, (\rho \circ \sigma)^3)
\]
where $<\rho, \sigma> = F_2,$ 
the free group on two generators.
(For example, consult \cite{LangSL}, in which the {\sc Bruhat} decomposition is given, by means of which one can write down generators and relations for $SL(2,\lvR).$) Viewing $PSL(2, \lvZ)$ as a group of linear fractional transformations of $H$, generators may be given by
\[
\rho: \tau \mapsto  1+\tau
\]
and
\[
\sigma:\tau \mapsto -\frac{1}{\tau}.
\]

It then follows that for any real tangential basepoint $\ora{ab}$, 
\begin{equation}
\label{e:Iso}
G_{\ora{ab}}\simeq PSL(2, \lvZ). 
\end{equation}

Since we now have
\[
\pi_1(\Pomt, \ora{ab}) \;\;\lhd \;\;G_{\ora{ab}},
\]
the isomorphism of (\ref{e:Iso}) gives the isomorphism of the fundamental group with the congruence subgroup $\Gamma(2)/\{\pm1\}$ on the level of the generators $\rho$ and $\sigma$.

\begin{notrem}
The multiplication in $PSL(2, \lvZ)$ is written in the functional order, whereas concatenation of paths in $G_{\ora{ab}}$ occurs in the order in which the paths are written.
\end{notrem}

\begin{remark} 
\label{r:corr}
Denote the isomorphism 
of (\ref{e:Iso}) by 
\[
\Psi_{ab}: G_{\ora{ab}} \stackrel{\simeq}{\rightarrow}PSL(2, \lvZ),
\]
writing 
$\Psi:=\Psi_{01}$ in the special case of $\ora{ab}=\ora{01}.$ 
\end{remark}

We know that for any given $u \in G_{\ora{ab}},$ $\Psi_{ab}(u)$ is a transformation of the upper half plane which sends the lift of $\ora{ab}$ under the covering map $\lambda : H \rightarrow \Pomt$ in some fixed fundamental domain for $\Pomt$, to some lift of the endpoint of $u$ under $\lambda(\tau).$

Finally we remark that with notation as above, 
\[
[\Psi_{ab}^{-1}(\cdot)]_{ab}:PSL(2, \lvZ) \rightarrow SL(2, \lvZ/2\lvZ)
\]
is the usual projection  (i.e. $\sA$ of (\ref{e:ses})).

Subsequently write $\sA(\ups) = \ol{\ups}$ for any $\ups \in PSL(2, \lvZ),$ and suppress the mapping $\Psi_{ab}$ from the notation. (I.e. implicitly identify elements of $PSL(2, \lvZ)$ with those of $G_{\ora{ab}}$).

\subsection{Extending the monodromy representation}

\subsubsection{The universal prounipotent bundle with connection on $\Pomt$}
\label{sss:UPB}
For definitions and properties of {\sc Chen} iterated integrals, the reader is referred to \cite{Hain:Lec} or \cite{Joyner:ii}, and for general facts related to bundles with connections on curves (and parallel transport), to \cite{Deligne:RSP} or \cite{Joyner:thesis}. 

Concretely, the universal prounipotent bundle with connection (cf. \cite{Kim:UAM}) on $\Pomt$ is constructed as follows:
With $X_0$ and $X_1$ formal non-commuting variables as above and $I=(X_0,X_1)$ the augmentation ideal, let
\[
U_{n}:=\lvC<X_0,X_1>/I^{n+1}, 
\]
i.e. the algebra comprising linear combinations of words in the $X_{j}$ of length less than or equal to $n.$ The inverse limit of the $U_{n}$ is the power series algebra in the non-commuting variables
\[
U:=\lim_{\leftarrow}U_{n} = \lvC\!\ll \! X_{0},  X_{1}\!\gg.
\]
Now we set
${\sU}_{n}: = U_{n}\otimes {\sO}_{\Pomt}$ and
${\sU}:={{\lim}_{\leftarrow}}{\sU}_{n}.$ With the $\omega_{j}$ defined as above for $j=0,1,$ and $|w|$ denoting the length of the word $w$ in the $X_{j},$ a compatible family of connections on the ${\sU}_{n}$ can be defined, giving rise to a connection on ${\sU}$: Indeed, let 
\[
\sum_{|w|\leq n}f_{w}[w] \in {\sU}_{n}
\]
be arbitrary, and set
\[
\nabla_{n}(\sum_{|w|\leq n}f_{w}[w])=\sum_{|w|\leq n}df_{w}[w]-pr_{n}\sum_{|w|\leq n}f_{w}\sum_{i=1}^{m}\omega_{i}[wX_{i}]
\]
where $pr_{n}$ is the projection to ${\sU}_{n}$ - i.e. the augmented words $[wX_{i}]$ having length greater than $n$ are disregarded.
One checks readily that $\nabla_{n}$ is a connection on ${\sU}_{n},$ 
which is  unipotent (that is to say, a successive extension of trivial bundles $({\sO}_{\Pomt}^r, d)$; for a similar computation see 
\cite{Kim:UAM}).
Moreover, for $k >0$  the (suitably interpreted) restriction of the connection on ${\sU}_{n+k}$ to $ {\sU}_{n}$ evidently agrees with $\nabla_{n}.$ 
Hence $({\sU}, \nabla)$ is the inverse limit of unipotent connections on $X$. 

$\nabla$ is identical to the formal {\sc Knizhnik-Zamolodchikov} (KZ) equation
\[
{d}G(z, X_0,X_1) = \left(\frac{dz}{z}X_0 + \frac{dz}{1-z}X_1\right)G(z, X_0,X_1).\]
A fundamental solution to this equation asymptotic to $\exp(X_0\log z)$ as $z$ approaches 0 is the polylogarithm generating function $\Li(z, X_0, X_1)$, given by the {\sc Chen} series
\[
\Li(z, X_0,X_1):=\sum_{w}\int_{[\ora{01},z]}\omega_{i_1} \ldots \omega_{i_k}X_{i_1} \ldots X_{i_k}
\]
where $[\ora{01},z]$ denotes a tangential path from $\ora{01}$ to $z$ which winds around neither 1 nor $\infty$ in $\Pomt$; and other notation is as in the introduction.

\subsubsection{The reduced action on sections of $\sU$}
\label{S:red.act}
The $\Lambda$ action on $\Pomt$ by linear fractional transformations lifts to the (global) sections of $\sO_{\Pomt}$ in the obvious way. 
This produces an action on sections of $\sU$ once 
a suitable action of $\Lambda$ on the formal variables $X_0$ and $X_1$ is defined. The latter was determined by {\sc Okuda} and {\sc Ueno} in \S 3 of \cite{OU}, in which formal algebraic arguments and the theory of differential equations was used to compute the $\Lambda$ action on the fundamental solutions to the {\sc KZ} equation with specific asymptotics at 0, 1 and $\infty$ respectively, generalizing a calculation of {\sc Drinfel'd}. The action on $X_0$ and $X_1$ arises from a simple substitution action on the {\sc KZ} equation: 
\begin{e.g.}
Consider the element $\ol\sigma :z \mapsto 1-z$ of $\Lambda.$ Making this substitution in the {\sc KZ} equation yields
\[
-\frac{d}{dz} G(1-z, X_0,X_1)
=
\left(
\frac{X_0}{1-z} + \frac{X_1}{z}
\right)G(1-z,X_0,X_1)
\]
- i.e.
\begin{equation}
\label{e:s_1KZ}
\frac{d}{dz} \tilde{G}(z, X_0,X_1)
=
\left(
\frac{-X_1}{z} + \frac{-X_0}{1-z}
\right)\tilde{G}(z,X_0,X_1).
\end{equation}
This equation is identical to the original {\sc KZ} equation but for the interchanging of $X_0 \leftrightarrow -X_1.$ Therefore we define the action of $\ol\sigma$ on the pair $(X_0,X_1)$ 
of formal non-commuting variables,  
as the involution $(X_0, X_1) \mapsto (-X_1, -X_0)$.  
\end{e.g.}

This example may be imitated for each element of $\Lambda$, 
 %
and as in (25) of \cite{OU} it is convenient to summarize all transformations of $(X_0,X_1)$ which arise in this way.  The associated linear fractional transformations of $\Pomt$ are also tabulated:
\begin{center}
\begin{tabular}{rll}
Elt. of $SL(2, \lvZ/2\lvZ)$&
Lin. frac. tr.
&
Action on $(X_0, X_1)$
\\
1 :& $z \mapsto z$ & $(X_0, X_1) \mapsto (X_0,X_1)$
 \\
$\ol\sigma           : $         &$z \mapsto 1-z$  &$ (X_0, X_1) \mapsto  ( -X_1, -X_0)$ \\
$\ol\rho : $   &$ z \mapsto \frac{z}{z-1}$&$ (X_0, X_1) \mapsto (X_0, X_0-X_1)$ 
\\
$\ol\sigma \circ \ol\rho :$&$ z\mapsto \frac{1}{1-z}$
& 
$(X_0, X_1) \mapsto (-X_1,X_0-X_1)$ \\
$\ol\rho \circ\ol\sigma:$ & $z \mapsto \frac{z-1}{z} $ & 
$(X_0, X_1) \mapsto (X_1-X_0,-X_0)$\\
$\ol\rho\circ\ol\sigma\circ\ol\rho
= \ol\sigma \circ \ol\rho \circ \ol\sigma:$
&$z \mapsto \frac{1}{z}$& $(X_0, X_1) \mapsto (X_1-X_0,X_1)$ \\
\end{tabular}
\end{center}

Now one can state the
\begin{defn}
\label{d:SL2.act}
For every $\ol\ups\in SL(2, \lvZ/2\lvZ)$ and every global section $L(z, X_0,X_1)$ of $\sU$, set
\[
L^{\ol\ups}(z,X_0,X_1):=
L(\ol\ups(z),  \ol\ups X_0, \ol\ups X_1)
\] 
and refer to this as the $SL(2, \lvZ/2\lvZ)$-action on global sections of $\sU.$
\end{defn}

\begin{e.g.} 
\label{eg:2}
We compute $\Li^{\ol\sigma}(z, X_0,X_1):$ 
By construction, 
\[
\Li^{\ol\sigma}(z,  \ol\sigma X_0,\ol\sigma X_1) = \Li(1-z, X_0,X_1)
\]
is a fundamental solution to (\ref{e:s_1KZ}). Formally, $\Li(z, -X_1,-X_0)$ is also.
Recall from \S \ref{sss:UPB} that
\[
\Li(z,X_0,X_1) \exp(-X_0 \log z)\rightarrow 1
\]
as $z \rightarrow 0.$ Hence
\begin{equation}
\label{e:Li.asy}
\Li(z, -X_1,-X_0) \exp(X_1 \log z) \rightarrow 1
\end{equation}
as $z \rightarrow 0.$ Now recall from \cite{Cartier}
\[
\lim_{z \rightarrow 1} \Li(z, X_0,X_1) \exp(X_1 \log (1-z)) 
= 
\Phi_{KZ}(X_0,X_1), 
\]
where $\Phi_{KZ}(X_0,X_1)$ denotes the {\sc Drinfel'd} associator{\footnote{This expression can be taken as the definition of $\Phi_{KZ}$, but this formal power series can also be given more explicitly. See \cite{LeMurakami}.}},
or equivalently, 
\[
\lim_{z \rightarrow 0} \Li(1-z, X_0,X_1) \exp(X_1 \log z) 
= 
\Phi_{KZ}(X_0,X_1).
\]
But then $\Phi_{KZ}(X_0,X_1)\Li(z, -X_1,-X_0)$ and $\Li(1-z, X_0,X_1)$ share the same asymptotics near zero and both are solutions to the {\sc KZ} equation. By uniqueness of such solutions, then 
\[
\Li^{\ol\sigma}(z, X_0,X_1) = \Li(1-z, -X_1,-X_0) = \Phi_{KZ}(-X_1,-X_0) \Li(z, X_0,X_1)
\]
\end{e.g.}
  
We remark that by the symmetry in the above computation, it is evident that $\Phi_{KZ}(X_0,X_1)^{-1} = \Phi_{KZ}(-X_1, -X_0) 
,$ a fact which will be used often in what follows.

With the notation of \ref{d:SL2.act}, the computations of Propostion 2 of \cite{OU}, (which run in the same vein as \ref{eg:2}), may be summarized by
\begin{prop}
\label{p:OU}
\begin{eqnarray*}
\Li^{\ol\sigma}(z, X_0,X_1) 
&= &
\Phi_{KZ}(-X_1,-X_0) \Li(z, X_0,X_1)
\\
\Li^{\ol\rho}(z,  X_0,X_1) 
&=&
 \exp(\mp X_0 i \pi ) \Li(z, X_0,X_1)
\\
\Li^{\ol\sigma\circ\ol\rho}(z, X_0,X_1)
&=&
\exp(\pm X_1 i \pi ) \Phi_{KZ}(-X_1,-X_0)\Li(z,X_0,X_1)
\\
\Li^{\ol\rho\circ\ol\sigma}(z, X_0, X_1)
&=&
\Phi_{KZ}(X_0,X_0-X_1)^{-1}
\exp(\mp X_0 i \pi)  
  \Li(z,X_0,X_1)
  \end{eqnarray*}
and
\begin{eqnarray*}
&& 
\Li^{\ol\rho \circ\ol\sigma \circ \ol\rho }(z, X_0, X_1)\\
&=&
\exp(\pm(X_0 -X_1)i \pi ) \Phi_{KZ}(X_1-X_0,-X_0)
\exp(\mp X_0 i \pi) \Li(z, X_0,X_1)
\\
&=&
\Li^{\ol \sigma \circ\ol\rho \circ\ol\sigma }(z, X_0, X_1)
\\
&=&
\Phi_{KZ}(X_1-X_0,X_1) \exp(\pm X_1 i \pi )\Phi_{KZ}(-X_1,-X_0)\Li(z,X_0,X_1)
\end{eqnarray*} 
where the ambiguity in sign is according to $z$ being in the upper or lower half plane respectively.  
\end{prop}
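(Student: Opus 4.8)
The plan is to compute each of the six transforms directly from Definition \ref{d:SL2.act}, in the manner of Example \ref{eg:2}, after isolating a single structural fact that turns each computation into an asymptotic comparison. That fact is that the substitution on the formal variables recorded in the table of \S\ref{S:red.act} is exactly the one compensating $z \mapsto \ol\ups(z)$ in the form $\frac{dz}{z}X_0 + \frac{dz}{1-z}X_1$, so that each $\Li^{\ol\ups}(z, X_0, X_1)$ is again a group-like solution of the same equation $\nabla G = 0$ in $z$. Since $\Li$ satisfies this equation, the solution sheaf is a left torsor under the group of group-like, $z$-independent series, whence $\Li^{\ol\ups}(z, X_0, X_1) = C_{\ol\ups}(X_0, X_1)\,\Li(z, X_0, X_1)$ for a unique group-like automorphy factor $C_{\ol\ups}$. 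The entire content of the proposition is then the identification of the six factors $C_{\ol\ups} = \Li^{\ol\ups}\,\Li^{-1}$.

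To pin down $C_{\ol\ups}$ I would match regularized asymptotics, organizing the elements by the image $\ol\ups(0) \in \{0,1,\infty\}$: as $z \to 0$ the first argument $\ol\ups(z)$ tends to $\ol\ups(0)$, and the leading behaviour of $\Li^{\ol\ups}$ is governed by that of $\Li$ near that vertex. For the elements fixing $0$ (the identity and $\ol\rho$) I would use $\Li(w, \cdot, \cdot)\exp(-X_0\log w) \to 1$ as $w \to 0$; for those carrying $0$ to $1$ (the $\ol\sigma$-type elements) the limit $\lim_{w\to1}\Li(w,A,B)\exp(B\log(1-w)) = \Phi_{KZ}(A,B)$ already used in Example \ref{eg:2}; and for those carrying $0$ to $\infty$ either the corresponding regularized limit at $\infty$ or a second asymptotic match carried out at a vertex to which $\ol\ups$ sends a convenient point. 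In each case one forms the ratio and checks that the divergent $\log z$ contributions cancel — as they must, $C_{\ol\ups}$ being constant in $z$ — leaving a shifted associator times an exponential, the shift being precisely the variable substitution attached to $\ol\ups$. I have verified that this recovers $C_{\ol\rho} = \exp(\mp X_0 i\pi)$ and, from the behaviour near $1$, the $\Phi_{KZ}(X_1-X_0,-X_0)\exp(\mp X_0 i\pi)$-type factors.

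The one delicate point is the origin of the signs. The factors $\exp(\pm X_0 i\pi)$ and $\exp(\pm X_1 i\pi)$ appear whenever the logarithm being expanded is evaluated at a negative real quantity: expanding, for instance, $\log\!\big(\tfrac{z}{z-1}\big)$ or $\log\!\big(1-\ol\ups(z)\big)$ as $z \to 0$ produces a term $\log(\text{negative real}) = \log|\,\cdot\,| \pm i\pi$, with the sign fixed by whether $z$ is approached through the upper or lower half-plane, which is exactly the ambiguity recorded in the statement. I would track this branch choice consistently through all six computations, using the inversion identity $\Phi_{KZ}(X_0,X_1)^{-1} = \Phi_{KZ}(-X_1,-X_0)$ noted after Example \ref{eg:2} to bring each answer into the displayed shape, and using that the abelian factors built from a single $X_i$ commute with the $\log z$ terms so that those terms telescope away.

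The main obstacle I expect is the final consistency requirement: the longest element has two reduced words, $\ol\rho\circ\ol\sigma\circ\ol\rho = \ol\sigma\circ\ol\rho\circ\ol\sigma$, and the proposition records the two expressions produced by the two routes. Verifying that they coincide is not formal — it amounts to a relation of hexagon type for $\Phi_{KZ}$, combined with the inversion identity and the commutation of the abelian exponentials — and it simultaneously certifies that the individual asymptotic matches have been carried out with mutually compatible branch conventions. I would treat this equality as the capstone check of the whole computation.
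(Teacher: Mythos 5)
Your proposal follows essentially the same route as the paper, which defers to the computations of Okuda--Ueno and states that they ``run in the same vein as'' Example \ref{eg:2}: the variable substitution in the table of \S\ref{S:red.act} is designed so that $\Li^{\ol\ups}$ again solves the {\sc KZ} equation, and the constant group-like prefactor is pinned down by matching regularized asymptotics at the vertex $\ol\ups(0)$, with the $\exp(\pm i\pi X_j)$ factors and their sign ambiguity arising from branches of the logarithm at negative reals. The one place where your logic runs in the opposite direction from the paper's is the last display: you propose to \emph{verify} that the two expressions for the longest element agree by invoking a hexagon-type relation for $\Phi_{KZ}$ as an external input, and you call this the non-formal capstone of the computation. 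In the paper's framework the equality is free: the action of Definition \ref{d:SL2.act} depends only on the element $\ol\rho\circ\ol\sigma\circ\ol\rho=\ol\sigma\circ\ol\rho\circ\ol\sigma$ of $SL(2,\lvZ/2\lvZ)$ (i.e.\ on the single transformation $z\mapsto 1/z$), not on a word in the generators, so the two expressions --- obtained by expanding the two words via the composition rule --- must coincide by well-definedness, and the paper explicitly points out that this equality is then a \emph{derivation} of the hexagonal relations rather than an application of them. Either direction yields a correct proof (the hexagon relations are of course true), but you should be aware that no independent verification is needed, and that treating it as one forfeits the remark following the proposition.
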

The ambiguity in sign will be resolved in lifting the action to $PSL(2,\lvZ).$  

We remark that the equality 
 $\Li^{\ol\rho \circ\ol\sigma \circ \ol\rho }
 =\Li^{\ol \sigma \circ\ol\rho \circ\ol\sigma }$ follows from the well-definedness of the $\Lambda$ action and is a means of using the braid relation $\ol\rho \circ\ol\sigma \circ \ol\rho =  \ol \sigma \circ\ol\rho \circ\ol\sigma $ to establish the (highly non-trivial) hexagonal relations of {\sc Drinfel'd}, to wit 
 \begin{eqnarray*}
&& \Phi_{KZ}(X_1-X_0,X_1) \exp(\pm X_1 i \pi )\Phi_{KZ}(-X_1,-X_0)
\\
&=&
 \exp(\pm(X_0 -X_1)i \pi ) \Phi_{KZ}(X_1-X_0,-X_0) \exp(\mp X_0 i \pi).
 \end{eqnarray*}

\subsubsection{Lifting the action on sections of $\sU$ to $PSL(2, \lvZ)$}
\label{ss:lift.act}

The action of $SL(2, \lvZ/2\lvZ)$ on the formal variables $X_0$ and $X_1$ as given in the table in \S \ref{S:red.act} extends by linearity to polynomials in the $X_j$ with complex coefficients, and thereby to the quotients
\[
\lvC<\!\!X_0,X_1\!\!>/ I^{N+1}
\]
(where $I=(X_0,X_1)$ denotes the augmentation ideal); and hence to the inverse limit $\lvC\ll\!\!X_0,X_1\!\!\gg$. More precisely, we have:
\begin{defn} 
\label{d:ps}
The action of $\ol\alpha\in SL(2, \lvZ/2\lvZ)$ on a formal power series $F(X_0,X_1) \in \lvC\!\ll\!\!X_0,X_1\!\!\gg$ is given by 
\[
F(X_0,X_1)^{\ol\alpha}
:=
F( \ol\alpha X_0,\ol\alpha X_1).
\] 
\end{defn}


A given element $\alpha \in PSL(2, \lvZ)$ then acts on power series by reduction to $SL(2, \lvZ/2\lvZ)$. In this case we replace $\ol\alpha$ by $\alpha$ in the notation for the above action -i.e. we set
\begin{equation}
\label{e:SL2}
F(X_0,X_1)^{\alpha}:=F(X_0,X_1)^{\ol\alpha}.
\end{equation}

Now let $V_a$ be some open neighbourhood of $a \in \{0,1,\infty\}$ in $\mathbb{P}^{1}$ for which $(V_a \setminus\{a\})\cap\{0,1,\infty\}$ is empty. Then set 
$U_a:=V_a \setminus\{a\}.$ This is an open set of $\Pomt.$
Suppose that $L_a(z, X_0,X_1)$ is a section of $(\sU, \nabla)$ defined over $U_a$ - i.e. $L_a(z, X_0,X_1) \in \Gamma(U_a,\sU)$. 
As above, let $\ol\ups$ denote the image of $\ups\in PSL(2, \lvZ)$ under the usual projection map to $SL(2, \lvZ/2\lvZ).$ By identifying the elements of $PSL(2, \lvZ)$ with those of $G_{\ora{ab}}$ as in \S \ref{sss:top}, the $SL(2, \lvZ/2\lvZ)$-action on section of $\sU$ as in Definition \ref{d:SL2.act} can be lifted to an action of $PSL(2, \lvZ)$ as follows:
\begin{defn} Fixing a choice of basepoint $\ora{ab},$ then given $\ups \in PSL(2, \lvZ),$ analytically continue $L_a(z,\ol\ups X_0,\ol \ups X_1)$ along any path of the corresponding homotopy class of paths $\Psi^{-1}_{ab}(\ups)$ in $G_{\ora{ab}}$. 
This is an action of $PSL(2, \lvZ)$ for which the image of $L_a(z, X_0,X_1)$ will be denoted by
\[
L_a^{\upsilon}(z, X_0,X_1).
\]
\end{defn}

We proceed to compute this action for the distinguished section $\Li(z, X_0,X_1) \in \Gamma(U_{0}, \sU).$ Begin by setting
\[
\omega(X_0,X_1):=
\frac{dz}{z}X_0+\frac{dz}{1-z}X_1
\]
and write
\[
\omega(X_0,X_1)^{\alpha} = \omega(\ol\alpha X_0, \ol\alpha X_1 ).
\]
The following results will prove to be essential. The proofs are elementary.
\begin{lemma}
\label{l:pullback}
For any $\alpha \in PSL(2, \lvZ),$
\[
{\ol\alpha}^{*}\omega(X_0, X_1)
:=
\left[\ol\alpha^{*}\left(\frac{dz}{z}\right)\right]X_0
+
\left[\ol\alpha^{*}\left(\frac{dz}{1-z}\right)\right]X_1
=
\omega(X_0, X_1)^{\ol\alpha^{-1}}.
\]
\end{lemma}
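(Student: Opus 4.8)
First I would note that both sides of the asserted equality are $\lvC$-linear in the formal variables $X_0,X_1$, and that $X_0$ and $X_1$ are linearly independent. Since $\ol\alpha$ permutes the punctures $\{0,1,\infty\}$, the pullback $\ol\alpha^{*}$ carries each dlog form with a logarithmic pole in $\{0,1,\infty\}$ to another such form; as $\frac{dz}{z}$ and $\frac{dz}{1-z}$ span $H^{1}(\Pomt,\lvC)$, both $\ol\alpha^{*}\!\left(\frac{dz}{z}\right)$ and $\ol\alpha^{*}\!\left(\frac{dz}{1-z}\right)$ are $\lvZ$-linear combinations of these two forms. Matching the coefficients of $X_0$ and $X_1$ separately then reduces the lemma to a pair of explicit $1$-form identities, each provable by a partial-fractions computation of exactly the type already carried out for $\ol\sigma:z\mapsto 1-z$ in the worked example of \S\ref{S:red.act}.

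Next I would check the identity on the two generators $\ol\sigma$ and $\ol\rho$ of $SL(2,\lvZ/2\lvZ)\simeq\Lambda$. For $\ol\sigma$ this is precisely the substitution computation of \S\ref{S:red.act}, which gives $\ol\sigma^{*}\!\left(\frac{dz}{z}\right)=-\frac{dz}{1-z}$ and $\ol\sigma^{*}\!\left(\frac{dz}{1-z}\right)=-\frac{dz}{z}$, reproducing the tabulated action $(X_0,X_1)\mapsto(-X_1,-X_0)$. For $\ol\rho:z\mapsto\frac{z}{z-1}$ the same procedure, using $\frac{1}{z(1-z)}=\frac1z+\frac1{1-z}$, yields $\ol\rho^{*}\!\left(\frac{dz}{z}\right)=\frac{dz}{z}+\frac{dz}{1-z}$ and $\ol\rho^{*}\!\left(\frac{dz}{1-z}\right)=-\frac{dz}{1-z}$, reproducing the tabulated action of $\ol\rho$. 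In effect the lemma on generators is built into the very definition of the $\Lambda$-action on $(X_0,X_1)$, which was introduced so that the transformed {\sc KZ} equation again has the standard shape; the real content is therefore the passage to arbitrary group elements.

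To make that passage I would argue multiplicatively. Pullback of forms is contravariant, $(\ol\alpha\circ\ol\beta)^{*}=\ol\beta^{*}\circ\ol\alpha^{*}$, while the action on the formal variables composes in a definite order. Assuming the lemma for $\ol\alpha$ and for $\ol\beta$, I would write $(\ol\alpha\circ\ol\beta)^{*}\omega(X_0,X_1)=\ol\beta^{*}\!\left(\ol\alpha^{*}\omega(X_0,X_1)\right)=\ol\beta^{*}\!\left(\omega(\ol\alpha X_0,\ol\alpha X_1)\right)$, expand $\omega(\ol\alpha X_0,\ol\alpha X_1)=\frac{dz}{z}(\ol\alpha X_0)+\frac{dz}{1-z}(\ol\alpha X_1)$, apply the lemma for $\ol\beta$ to each of the two forms $\frac{dz}{z},\frac{dz}{1-z}$ (the coefficients $\ol\alpha X_0,\ol\alpha X_1$ being constants under $\ol\beta^{*}$), and collect terms. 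The identity for $\ol\alpha\circ\ol\beta$ then falls out once the composition law of the variable-action is aligned with the order dictated by $\ol\beta^{*}$. Since $\ol\sigma$ and $\ol\rho$ generate $SL(2,\lvZ/2\lvZ)$, this finishes the proof; alternatively, because the group has only six elements, one may simply verify all six cases directly against the table.

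The main obstacle is exactly this last alignment of conventions: the order in which two elements are composed as {\sc M\"obius} maps (functional order) must be correctly matched to the order in which they act on $X_0$ and $X_1$, because pullback of forms reverses composition. A mismatched convention reproduces the three-cycle entries of the table in the transposed (inverse) order rather than the intended one, so the crux of the argument is to pair the contravariance of $\ol\alpha^{*}$ with the right composition convention for the action on the formal variables.
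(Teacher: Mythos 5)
Your argument is correct, and the generator computations ($\ol\sigma^{*}(dz/z)=-dz/(1-z)$, $\ol\sigma^{*}(dz/(1-z))=-dz/z$, $\ol\rho^{*}(dz/z)=dz/z+dz/(1-z)$, $\ol\rho^{*}(dz/(1-z))=-dz/(1-z)$) all check out against the table in \S\ref{S:red.act}. The paper, however, proves the lemma by a different (and much terser) route: it simply observes that for each of the six elements of $SL(2,\lvZ/2\lvZ)\simeq\Lambda$ the claimed identity is the very same elementary substitution computation by which the table of actions on $(X_0,X_1)$ was derived in the first place, and leaves the six verifications to the reader. You instead verify only the two generators and then extend multiplicatively via the contravariance $(\ol\alpha\circ\ol\beta)^{*}=\ol\beta^{*}\circ\ol\alpha^{*}$ together with linearity of the action on the span of $X_0,X_1$; your inductive step is in substance the paper's Lemma \ref{l:pullback1} (stated and proved immediately after this one) applied recursively. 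The direct six-case check buys brevity and immunity to bookkeeping errors, since each identity is read off the printed table; your multiplicative argument buys an explanation of \emph{why} the table is consistent under composition, at the cost of the convention issue you rightly single out as the crux: pullback reverses composition while the variable action is written in functional order, and a mismatched pairing of the two produces, on the two three-cycles, the action of $\ol\alpha^{-1}$ in place of that of $\ol\alpha$ --- a discrepancy invisible on the elements that equal their own inverses, which is exactly why checking only $\ol\sigma$ and $\ol\rho$ cannot by itself pin the convention down. Since you explicitly flag that alignment and also offer the six-case direct verification as a fallback (which collapses your proof into the paper's), the argument is complete.
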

\begin{lemma}
\label{l:pullback1}
For any $\alpha, \beta \in PSL(2, \lvZ),$
\[
{\ol\alpha}^{*}[{\ol\beta}^{*}\omega( X_0,  X_1)]
=
\omega( X_0,  X_1)^{(\ol\alpha\circ\ol\beta)^{-1}}=(\ol\alpha \circ \ol\beta)^{*}\omega(X_0,X_1).
\]
\end{lemma}

Now let $\int_\alpha\tilde\omega^n$ denote the $n$-fold {\sc Chen} iterated integral of the form $\tilde\omega$ along $\alpha$ - i.e. an iterated integral in which $\tilde\omega$ is repeated $n$ times. Also write $\int_\alpha\tilde\omega^0 =1.$ Then we have
\begin{prop}
\label{t:PSL.act}
For any $\alpha \in PSL(2, \lvZ),$
\[
\Li^{\alpha}(z, X_0,X_1)
=
F_{\alpha}(X_0,X_1)
\Li(z,  X_0, X_1)
\]
where $F_{\alpha}(X_0,X_1)$ is a formal power series given by the {\sc Chen} series
\[
F_{\alpha}(X_0,X_1)
:=
\sum_{n\geq0}\int_{\alpha}\omega( \ol\alpha X_0,\ol\alpha X_1)^n.
\]
\end{prop}
Implicitly here, $PSL(2, \lvZ)$ is identified with $G_{\ora{01}}$. 
\begin{proof}
When the {\sc Chen} iterated integrals are suitably interpreted - regularizing $\frac{dz}{z}$ at $z=0$ and $\frac{dz}{1-z}$ at $z=1$ in the usual way (cf. \cite{Joyner:ii}) -   for $z \not \in (-\infty, 0) \cup (1, \infty)$ the polylogarithm generating function may be expressed as
\[
\Li (z, X_0,X_1) =\sum_{n\geq 0}\int_{[\ora{01},z]}
\omega(X_0,X_1)^{n}
\]
with notation as above.

Now consider $\alpha \in G_{\ora{01}}$. Denote the endpoint thereof by $\ora{cd},$ and a path from $\ora{cd}$ to $\ol\alpha z$ which does not cross the real axis by $[\ora{cd}, \ol\alpha z].$
Then composing paths in the order as written (i.e. not the functional order), the analytic continuation of $\Li(z, X_0,X_1)$ along $\alpha$ is given by 
\begin{equation*}
\sum_{n\geq 0}\int_{\alpha\cdot[\ora{cd},\ol\alpha(z)]}\omega(X_0,X_1)^{n}.
\end{equation*}

Consider a typical integral which appears here.
Using the coproduct formula for iterated integrals (since $\omega(X_0, X_1)$ is a 1-form),
\begin{align*}
&
\int_{\alpha\cdot [\ora{cd},\ol\alpha z]}
\omega( X_0, X_1)^{n}
\\
&=
 \sum_{k=0}^{n}
\int_{\alpha}\omega( X_0, X_1)^{k}\cdot
\int_{[\ora{cd},\ol\alpha z]}
\omega(X_0, X_1)^{n-k}.
\end{align*}
Now
\begin{align*}
\int_{[\ora{cd},\ol\alpha z]}
\omega(X_0,X_1)^{n-k}
&=
\int_{[\ora{01}, z]}
\ol\alpha^{*}\omega(X_0, X_1)^{n-k}\\
&=
\int_{[\ora{01},z]}
\omega(\ol\alpha^{-1} X_0 , \ol\alpha^{-1}X_1 )^{n-k}
\end{align*}
by the Lemma \ref{l:pullback}. Hence, replacing $X_j$ by $\ol\alpha X_j$ for $j=0,1$,  $\Li^{\alpha}(z, X_0,X_1)$ is the same as
\begin{align*}
&
\sum_{n \geq 0}
\sum_{k=0}^{n}
\int_{\alpha}\omega(\ol\alpha X_0, \ol\alpha X_1)^{k}\cdot\int_{[\ora{01},z]}
\omega(\ol\alpha^{-1}\circ \ol\alpha X_0,\ol\alpha^{-1}\circ\ol\alpha X_1)^{n-k}
\\
&=\left[
\sum_{n \geq 0}\int_{\alpha}\omega(\ol\alpha X_0,  \ol\alpha X_1)^{n}\right]
\cdot
\Li(z, X_0, X_1).
\end{align*}
\end{proof}

Now we can prove the fundamental 
\begin{cor}
\label{c:qtqH}
\[
\Li^{\rho}(z,X_0,X_1)
= 
\exp(i \pi X_0)\Li(z, X_0, X_1)
\]
\[
\Li^{\sigma}(z, X_{0}, X_{1}) = \Phi_{KZ}(X_{0},X_{1})^{\sigma}\Li(z, X_0, X_1)
\]
\end{cor}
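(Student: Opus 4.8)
The plan is to specialize Theorem \ref{t:PSL.act} to the two generators $\rho$ and $\sigma$ of $PSL(2,\lvZ)$ and then to evaluate the resulting {\sc Chen} series $F_\rho$ and $F_\sigma$ explicitly. By the correspondences recorded in Remark \ref{r:corr}, the isomorphism $\Psi$ carries $r$ to $\rho$ and $s$ to $\sigma$, so that $\Psi^{-1}\rho = r$ and $\Psi^{-1}\sigma = s$ in $G_{\ora{01}}$. Thus everything reduces to computing the regularized parallel transport $\sum_{n\geq 0}\int_{\cdot}\omega(Y_0,Y_1)^n$ along these two concrete representatives and then performing the substitution $Y_j \mapsto \ol\alpha^{-1}X_j$ prescribed by the theorem.

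First I would treat $\sigma$. The class $s$ is the tangential path $[0,1]$ from $\ora{01}$ to $\ora{10}$, and as recalled in \S\ref{S:KZ} its suitably regularized parallel transport is by definition the {\sc Drinfel'd} associator, so $\sum_{n\geq 0}\int_s\omega(Y_0,Y_1)^n = \Phi_{KZ}(Y_0,Y_1)$. Since $\ol\sigma$ is involutive with $\ol\sigma^{-1}(X_0,X_1)=(-X_1,-X_0)$, the substitution gives $F_\sigma(X_0,X_1)=\Phi_{KZ}(-X_1,-X_0)$, which by Definition \ref{d:ps} is precisely $\Phi_{KZ}(X_0,X_1)^{\ol\sigma}=\Phi_{KZ}(X_0,X_1)^{\sigma}$. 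This yields the second identity.

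For $\rho$, the class $r$ is the upper-half-plane loop from $\ora{01}$ to $\ora{0\infty}$; it winds about the puncture $0$ alone and rotates the tangent direction by $\pi$. Using flatness of the connection (equivalently, homotopy invariance of the transport), I would deform $r$ within its class to a small counterclockwise semicircle about $0$, where the only singular form is $\frac{dz}{z}$, paired with $Y_0$. The regularized transport across a half-turn about $0$ is then $\exp(i\pi Y_0)$, the upper-half-plane orientation fixing the $+$ sign and so resolving the $\mp$ ambiguity of Proposition \ref{p:OU}. Since $\ol\rho^{-1}$ fixes $X_0$, the substitution leaves $F_\rho(X_0,X_1)=\exp(i\pi X_0)$, giving the first identity.

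The two substitutions and the reading-off of the associator are routine; the one step needing care is the transport along $r$. The hard part will be justifying that only the local behaviour at $0$ contributes --- passing to the small semicircle via homotopy invariance --- and pinning the factor down as exactly $\exp(i\pi X_0)$, with the correct sign forced by the upper-half-plane choice rather than the full-loop value $\exp(2\pi i X_0)$ or the opposite sign.
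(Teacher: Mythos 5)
Your proposal is correct and follows essentially the same route as the paper: specialize Theorem \ref{t:PSL.act} to $r=\Psi^{-1}\rho$ and $s=\Psi^{-1}\sigma$, observe that along $r$ only the form $\frac{dz}{z}$ contributes so the regularized transport is $\exp(i\pi Y_0)$ (with the sign fixed by the upper-half-plane half-turn) while $\ol\rho^{-1}$ fixes $X_0$, and identify the regularized {\sc Chen} series along $[0,1]$ with $\Phi_{KZ}$ before substituting $\ol\sigma^{-1}(X_0,X_1)=(-X_1,-X_0)$. The paper's proof does exactly this, merely asserting the vanishing of the $\frac{dz}{1-z}$ contributions along $r$ directly rather than passing to a small semicircle by homotopy invariance.
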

\begin{proof}
Recall that $\Psi(r) = \rho$ and $\Psi(s) = \sigma$ in the notation of \S \ref{sss:top}. Also, $\ol\rho(X_0,X_1) = (X_0,X_0-X_1)$ while $\ol\sigma(X_0,X_1) = (-X_1,-X_0).$

One computes
\begin{equation*}
\int_{r}\left(\frac{dz}{z}X_0+ \frac{dz}{1-z}X_1\right)^{n}
=\frac{X_0^n}{n!}\left(\int_r\frac{dz}{z}\right)^n
= \frac{(i\pi X_0)^{n}}{n!}
\end{equation*}
by repeatedly using the shuffle product for iterated integrals and the fact that the integrals in which $\frac{dz}{1-z}$ occur vanish along $r$. 
Thence
\begin{align*}
F_{\rho}(X_0,X_1)
&=
\sum_{n=0}^{\infty}\int_{r}\left(\frac{dz}{z} X_0+ \frac{dz}{1-z}(X_0-X_1)\right)^{n}
\\
&=
\sum_{n=0}^{\infty}\frac{(i \pi X_{0})^{n}}{n!}
\\
&=
\exp (i \pi X_0).
\end{align*}

It is well-known that
\[
\Phi_{KZ}(X_0,X_1)
=
\sum_{n=0}^{\infty}\int_{[0,1]}
\left(\frac{dz}{z}X_0 +\frac{dz}{1-z}X_1\right)^{n},
\]
in which expression we understand the integrals to be regularized at 0 and 1 as before, and the shuffle regularization of iterated integrals is applied to those terms which otherwise diverge (see 3.4 of \cite{Cartier} for example). From Proposition \ref{t:PSL.act} the second assertion of the corollary follows.
\end{proof}

The power series which arise here do not look too different from those which result in the case of the $SL(2, \lvZ/2\lvZ)$ action as in \S \ref{S:red.act}. Where $\sigma$ is concerned,  the reason for this is that $\ol\sigma$ has a unique lift to $PSL(2,\lvZ)$. On the other hand, unlike that of $\ol\rho$ the action of $\rho$ is not involutive. 

\begin{thm}
\label{t:cocycle}
$F_{\bullet}(X_0,X_1)$ is an injective 1-cocycle for $PSL(2, \lvZ)$ in the multiplicative group of formal power series in the non-commuting variables $X_0$ and $X_1$ equipped with the action of $PSL(2,\lvZ)$ factoring through that of $SL(2, \lvZ/2\lvZ)$. Specifically, for any $ \ups, \ups' \in PSL(2, \lvZ),$
\[
F_{\ups}(X_0,X_1)^{\ups'}F_{\ups'}(X_0,X_1)= F_{\ups'\circ\ups}(X_0,X_1).
\]
\end{thm}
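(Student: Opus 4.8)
The plan is to derive the cocycle identity directly from the factorization $\Li^{\alpha} = F_{\alpha}\Li$ of Theorem \ref{t:PSL.act}, by applying the $\ups'$-action to the case $\alpha = \ups$ and comparing the result with the case $\alpha = \ups'\circ\ups$. First I would write, using Theorem \ref{t:PSL.act},
\[
(\Li^{\ups})^{\ups'}(z, X_0,X_1) = \bigl(F_{\ups}(X_0,X_1)\,\Li(z, X_0,X_1)\bigr)^{\ups'},
\]
and then argue that the $\ups'$-action distributes over this product as $F_{\ups}^{\ups'}\,\Li^{\ups'}$. By definition the $\ups'$-action consists of analytic continuation along the class $\Psi^{-1}(\ups')$ followed by the substitution $X_j \mapsto \ol{\ups'}^{-1}X_j$. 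Since $F_{\ups}(X_0,X_1)$ is independent of $z$, analytic continuation leaves it untouched and acts only on the $z$-dependent coefficients of $\Li$; and since the $SL(2,\lvZ/2\lvZ)$-substitution on $(X_0,X_1)$ is linear, it extends to an algebra endomorphism of $\lvC\!<\!<\!\!X_0,X_1\!\!>\!>$ and therefore respects the product. This yields $(\Li^{\ups})^{\ups'} = F_{\ups}^{\ups'}\,\Li^{\ups'}$, where $F_{\ups}^{\ups'}$ is precisely the series produced by the action of Definition \ref{d:ps}.

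Next I would invoke Theorem \ref{t:PSL.act} a second time for $\ups'$ itself, giving $\Li^{\ups'} = F_{\ups'}\Li$, so that
\[
(\Li^{\ups})^{\ups'}(z, X_0,X_1) = F_{\ups}^{\ups'}(X_0,X_1)\,F_{\ups'}(X_0,X_1)\,\Li(z, X_0,X_1).
\]
On the other hand, because the construction of \S\ref{ss:lift.act} genuinely defines an action of $PSL(2,\lvZ)$ on sections (so that $(L^{\ups})^{\ups'} = L^{\ups'\circ\ups}$), the left-hand side also equals $\Li^{\ups'\circ\ups} = F_{\ups'\circ\ups}\Li$ by a third application of Theorem \ref{t:PSL.act}. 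Equating the two expressions gives
\[
F_{\ups}^{\ups'}(X_0,X_1)\,F_{\ups'}(X_0,X_1)\,\Li(z, X_0,X_1) = F_{\ups'\circ\ups}(X_0,X_1)\,\Li(z, X_0,X_1).
\]

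Finally I would cancel $\Li(z, X_0,X_1)$ on the right. This is legitimate because $\Li$ is group-like, hence a Lie exponential with constant term $1$, and in particular a unit in the relevant ring of sections; cancelling leaves the desired identity $F_{\ups}^{\ups'}F_{\ups'} = F_{\ups'\circ\ups}$. The one step that needs genuine care, and which I expect to be the main obstacle, is the distributivity claim: that the composite operation (analytic continuation, then variable substitution) splits as $F_{\ups}^{\ups'}\Li^{\ups'}$ across the product $F_{\ups}\Li$. This rests on the two facts noted above, namely that left multiplication by the $z$-independent series $F_{\ups}$ commutes with analytic continuation of the coefficients, and that the reduction-induced substitution is an algebra homomorphism rather than merely a linear map. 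Once these are made explicit, the remainder of the argument is purely formal.
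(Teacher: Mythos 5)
Your argument has a circular step at its core. You derive the cocycle identity from the assumption that $(\Li^{\ups})^{\ups'} = \Li^{\ups'\circ\ups}$, justifying this by saying ``the construction of \S\ref{ss:lift.act} genuinely defines an action of $PSL(2,\lvZ)$ on sections.'' But the paper never establishes that composition law independently; on the contrary, in the discussion preceding the theorem it is pointed out that it is precisely the cocycle requirement on $K_{\bullet}$ that \emph{ensures} $(L^{\ups})^{\ups'} = L^{\ups'\circ\ups}$, i.e.\ the group-action property is a consequence of Theorem \ref{t:cocycle}, not an input to it. Unwinding the definitions shows why this is not automatic: applying $\ups$ then $\ups'$ means continuing along $u=\Psi^{-1}(\ups)$, substituting $\ol\ups^{-1}X_j$, then continuing along $u'=\Psi^{-1}(\ups')$ based at $\ora{01}$ and substituting $\ol{\ups'}^{-1}X_j$; whereas applying $\ups'\circ\ups$ means continuing along the single concatenation $uu' = u\cdot([u]*u')$, whose second leg is the \emph{transported} path $[u]*u'$ based at the endpoint of $u$, followed by one substitution by $(\ol{\ups'}\circ\ol\ups)^{-1}$. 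Reconciling the continuation along $u'$ with that along $[u]*u'$, and the two successive substitutions with the single composite one, is exactly the nontrivial content of the theorem.

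The paper's proof supplies precisely this reconciliation: it computes $F_{\ups'\circ\ups}$ directly from its Chen-series definition over $uu'$, splits the iterated integral over the concatenation via the coproduct formula into $\sum_k \int_u \omega^k \int_{[u]*u'}\omega^{n-k}$, and then converts $\int_{[u]*u'}$ into $\int_{u'}$ of the pulled-back form using Lemma \ref{l:pullback} together with Corollary \ref{c:[]} (which identifies $[u]$ with $\ol\ups$). After the final substitution this is identified as $F_{\ups}^{\ups'}F_{\ups'}$. Your remaining steps --- distributing the $\ups'$-operation over the product $F_{\ups}\Li$ (legitimate since $F_{\ups}$ is $z$-independent and the substitution is an algebra endomorphism) and cancelling the invertible series $\Li$ --- are sound, but they do not repair the gap: to make your route work you would first have to prove the composition law for the action, and that proof is essentially the coproduct-plus-pullback computation you are trying to avoid.
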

\begin{proof}
Consider such arbitrary $\ups$ and $\ups' \in PSL(2, \lvZ)$ and identify them with some choices of paths in the corresponding homotopy classes of $G_{\ora{ab}}$.
Also, interpret $\int_{v}\omega^{0}$ as 1 so that
\[
F_{\ups' \circ \ups}(X_0,X_1)
=
\left.
\sum_{n=0}^{\infty}
\int_{\ups\ups'}
\left(\frac{dz}{z}Y_0+\frac{dz}{1-z}Y_1\right)^{n}\right|_{Y_j =\ol\ups' \circ \ol\ups X_j\;:j=0,1}
\]
Now recall from \S \ref{sss:top} that $\ups\ups' = \ups \cdot [\ups]*\ups'$, write $Y_j=\ol\ups' \circ \ol\ups X_j$ for $j=0,1$ as above, and use the coproduct formula for iterated integrals to compute
\begin{align*}
\int_{\ups\ups'}
\left(\frac{dz}{z}Y_0+\frac{dz}{1-z}Y_1\right)^{n}
&=
\sum_{k=0}^{n}
\int_{\ups}\omega(Y_0,Y_1)^{k}
\int_{[\ups]*\ups'}\omega(\ol\ups' \circ \ol\ups X_0,\ol\ups' \circ \ol\ups X_1)^{n-k}
\\
&=
\sum_{k=0}^{n}
\int_{\ups}\omega(Y_0,Y_1)^{k}
\int_{\ups'}\ol\ups^{*}\omega(\ol\ups' \circ \ol\ups X_0,\ol\ups' \circ \ol\ups X_1)^{n-k}
\\
&=
\sum_{k=0}^{n}
\int_{\ups} \omega(Y_0,Y_1)^{k}
\int_{\ups'}\omega((\ol\ups' \circ \ol\ups) \circ \ol\ups^{-1}X_0, (\ol\ups' \circ \ol\ups) \circ \ol\ups^{-1}X_1)^{n-k}
\end{align*}
using Lemmas \ref{l:pullback} and \ref{l:pullback1}. Hence
\begin{align*}
F_{\ups' \circ \ups}(X_0,X_1)
&=
\sum_{n \geq 0}\sum_{k=0}^{n}
\int_{\ups} \omega(\ol\ups' \circ \ol\ups X_0, \ol\ups' \circ \ol\ups X_1)^{k} \int_{\ups'}\omega(\ol{\ups'} X_0, \ol{\ups'} X_1)^{n-k}
\\
&=
F_{\ups}(X_0,X_1)^{\ups'}F_{\ups'}(X_0,X_1).
\end{align*}

Finally we prove the injectivity: Consider any $\alpha \in PSL(2, \lvZ)$ for which $F_{\alpha}(X_0,X_1)=1.$ We show that such $\alpha$ is necessarily trivial. First observe that because $\ol\alpha$ is invertible, also $F_{\alpha}(\ol\alpha X_0,\ol\alpha X_1)=1.$ But then 
\[
\int_{\alpha}\omega( X_0, {X_1})^{n}
=0
\]
for each $n\geq 1,$ since each such integral expression is homogeneous of degree $n$ in the $X_j.$ Even further, the coefficients of the monomials $X_{i_1} \ldots X_{i_n}$ here are all of the form of 
\[
\int_{\alpha} \omega_{i_1} \ldots \omega_{i_n}
\]
where $i_j$ is either 0 or 1. Consequently, all such integrals are necessarily zero. But then by {\sc Chen's} $\pi_1$ {\sc De Rham} Theorem (given for this case in Theorem 10 in\cite{Hain:Lec}), necessarily $\alpha$ is trivial, as was to be shown.
\end{proof}

Let $\Gamma$ denote an arbitrary fixed subgroup of $PSL(2, \lvZ)$ and define
\[
\sF_{\Gamma}:=\{F_{\alpha}(X_0,X_1) \in \Cs |\alpha \in \Gamma\}.
\]
\begin{lemma}
\label{l:LE}
The elements of $\sF_{PSL(2, \lvZ)}$ are group-like.
\end{lemma}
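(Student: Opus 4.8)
The plan is to read group-likeness straight off the {\sc Chen} series presentation of $F_\alpha(X_0,X_1)$ supplied by Theorem \ref{t:PSL.act}. Write $\gamma := \Psi^{-1}\alpha \in G_{\ora{01}}$ and set
\[
\tilde F_\alpha(Y_0,Y_1) := \sum_{n \geq 0} \int_{\gamma} \omega(Y_0,Y_1)^{n},
\]
so that $F_\alpha(X_0,X_1) = \tilde F_\alpha(\ol\alpha^{-1}X_0, \ol\alpha^{-1}X_1)$. The series $\tilde F_\alpha$ is precisely the generating series of iterated integrals along $\gamma$ in the formal variables $Y_0,Y_1$, i.e. a {\sc Chen} series of exactly the type considered in \S\ref{S:KZ}. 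First I would invoke the same comultiplication formula for iterated integrals that was used there to see that $G(z)$, $\Li(z,X_0,X_1)$ and $\Phi_{KZ}(X_0,X_1)$ are group-like: the coproduct identity for iterated integrals gives $\Delta \tilde F_\alpha = \tilde F_\alpha \,\hat\otimes\, \tilde F_\alpha$ once $\Delta Y_j = Y_j \otimes 1 + 1 \otimes Y_j$. Thus $\tilde F_\alpha$ is group-like with respect to the coproduct making the $Y_j$ primitive.

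It then remains to check that passing from $\tilde F_\alpha(Y_0,Y_1)$ to $F_\alpha(X_0,X_1) = \tilde F_\alpha(\ol\alpha^{-1}X_0,\ol\alpha^{-1}X_1)$ preserves group-likeness, and this is the only step meriting justification. It is immediate from the explicit $SL(2,\lvZ/2\lvZ)$ action tabulated in \S\ref{S:red.act}: each $\ol\alpha^{-1}X_j$ is a linear combination of $X_0$ and $X_1$ with integer coefficients, and since $X_0$ and $X_1$ are primitive, so is any such combination. Hence the substitution $Y_j \mapsto \ol\alpha^{-1}X_j$ is an algebra map sending primitive elements to primitive elements, so it intertwines the coproduct under which the $Y_j$ are primitive with the coproduct $\hat\otimes$ under which the $X_j$ are primitive; being a morphism of coalgebras, it carries group-like elements to group-like elements. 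Therefore $F_\alpha(X_0,X_1)$ is group-like, as claimed.

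An equivalent and even shorter route, which I would record as a remark, uses Theorem \ref{t:PSL.act} to write $F_\alpha = \Li^{\alpha} \cdot \Li^{-1}$ as an identity in $Ch(\sU)_z$: the section $\Li(z,X_0,X_1)$ is group-like by \S\ref{S:KZ}; its image $\Li^{\alpha}$ under analytic continuation along $\gamma$ followed by the substitution $X_j \mapsto \ol\alpha^{-1}X_j$ is again group-like, since parallel transport respects the torsor $\sP$ of group-like sections and the substitution preserves primitivity exactly as above; and the group-like elements form a group, so their quotient $F_\alpha$ is once more group-like. I do not anticipate any serious obstacle here; the single point requiring care is the interchange of the two coproducts under the linear substitution, but this is settled at once by the explicit table of \S\ref{S:red.act}.
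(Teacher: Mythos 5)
Your argument is correct, but it takes a different route from the paper's. The paper's proof never touches the {\sc Chen} series formula directly: it observes that $F_\rho = e^{i\pi X_0}$ and $F_\sigma = \Phi_{KZ}(-X_1,-X_0)$ are group-like, that substituting any primitive elements for $(X_0,X_1)$ in a group-like series yields a group-like series (and the $SL(2,\lvZ/2\lvZ)$-images of $X_0, X_1$ are primitive), and then uses the cocycle relation of Theorem \ref{t:cocycle} to write an arbitrary $F_\alpha$ as a product of such transformed generators, concluding because the {\sc Lie} exponentials form a group. You instead establish group-likeness of $F_\alpha$ in one stroke from the integral presentation in Theorem \ref{t:PSL.act}, via the comultiplication (shuffle) formula for iterated integrals, and then handle the substitution $Y_j \mapsto \ol\alpha^{-1}X_j$ exactly as the paper does for its generators. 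Both proofs hinge on the same auxiliary fact that a linear substitution of primitives is a coalgebra morphism; what your version buys is uniformity (no induction on a word decomposition of $\alpha$, no appeal to the cocycle identity), at the cost of having to note that the iterated integrals along $\Psi^{-1}\alpha$ are regularized at the tangential basepoints and that the shuffle relations persist for the regularized integrals --- a point the paper already relies on implicitly for $\Phi_{KZ}$, so it is not a gap, but it deserves a sentence. Your alternative remark writing $F_\alpha = \Li^{\alpha}\cdot \Li^{-1}$ is likewise sound, since $\sP$ is preserved by parallel transport and by the substitution.
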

\begin{proof}
$\Phi_{KZ}(X_0,X_1)$ is group-like by construction. $e^{i \pi X_0}$ is group-like since $X_0$ is primitive. In fact, replacing $(X_0,X_1)$ in each of these formal series by any pair of primitive elements of $\lvC<X_0,X_1>$, the resulting series are also group-like. Now the images of $X_0$ and $X_1$ under the action of the elements of $SL(2, \lvZ/2\lvZ)$ are all primitive. Consequently, each $F_\alpha(X_0,X_1)$ is a product of group-like elements, making it group-like too, since the {\sc Lie} exponentials form a group.
\end{proof}

Endow $\sF_{\Gamma}$ with a multiplication $\circledast$ coming from the $SL(2, \lvZ/2 \lvZ)$ action - i.e. set 
\[
F_\beta(X_0,X_1) \circledast F_\alpha(X_0,X_1) 
:= 
F_\alpha(X_0,X_1)^{\beta}F_{\beta}(X_0,X_1) = F_{\beta \circ \alpha}(X_0,X_1).
\]
This is well-defined by \ref{t:cocycle}.
 Also from \ref{t:cocycle} one obtains
\begin{thm}
\label{t:torsor}
$(\sF_{\Gamma}, \circledast)$ is a group which is isomorphic to $(\Gamma, \circ)$.
\end{thm}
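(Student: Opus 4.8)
The plan is to recognize $(\sF_{\Gamma}, \circledast)$ as nothing more than the group $(\Gamma, \circ)$ carried across the assignment $\alpha \mapsto F_{\alpha}(X_0,X_1)$, so that the whole statement collapses to checking that this assignment is a bijection intertwining the two operations. First I would introduce $\Theta_\Gamma : \Gamma \to \sF_{\Gamma}$, $\alpha \mapsto F_{\alpha}(X_0,X_1)$. Surjectivity is immediate from the definition of $\sF_{\Gamma}$, and injectivity is precisely Theorem \ref{t:faithful} restricted to the subgroup $\Gamma$ (the restriction of an injection is still injective), so $\Theta_\Gamma$ is a bijection of sets. The same injectivity is what legitimizes the operation $\circledast$ in the first place: although the defining formula $F_\alpha^{\beta}F_\beta$ invokes the group element $\beta$ rather than merely the series $F_\beta$, Theorem \ref{t:faithful} guarantees that $\beta$ is uniquely recovered from $F_\beta$, so the expression is unambiguous. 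This well-definedness is the only genuine subtlety in the whole argument, and it is entirely dispatched by \ref{t:faithful}; everything past it is pure transport of structure with no analytic content.

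The intertwining is exactly the cocycle identity already proven. Indeed, directly from the definition of $\circledast$ together with Theorem \ref{t:cocycle},
\[
\Theta_\Gamma(\beta) \circledast \Theta_\Gamma(\alpha)
=
F_\alpha(X_0,X_1)^{\beta}\,F_\beta(X_0,X_1)
=
F_{\beta \circ \alpha}(X_0,X_1)
=
\Theta_\Gamma(\beta \circ \alpha)
\]
for all $\alpha,\beta \in \Gamma$. Since $\Gamma$ is a subgroup of $PSL(2,\lvZ)$ we have $\beta \circ \alpha \in \Gamma$, which simultaneously shows that $\sF_{\Gamma}$ is closed under $\circledast$ and that $\Theta_\Gamma$ respects products.

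It then remains only to transport the group axioms across the bijection $\Theta_\Gamma$. Associativity of $\circledast$ follows from that of $\circ$ by way of the displayed intertwining. The class of the trivial path (the identity $e$ of $\Gamma$) gives $F_e(X_0,X_1) = 1$, since $\Psi^{-1}(e)$ is trivial and hence contributes only the empty word; this serves as a two-sided identity because $F_e \circledast F_\alpha = F_{e \circ \alpha} = F_\alpha = F_{\alpha \circ e} = F_\alpha \circledast F_e$. Finally $F_{\alpha^{-1}}$ is a two-sided inverse of $F_\alpha$, as $F_{\alpha^{-1}} \circledast F_\alpha = F_e = F_\alpha \circledast F_{\alpha^{-1}}$. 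Thus $(\sF_{\Gamma}, \circledast)$ is a group and $\Theta_\Gamma$ is a group isomorphism from $(\Gamma, \circ)$ onto it. I do not anticipate any real obstacle here beyond correctly citing \ref{t:faithful} for well-definedness and \ref{t:cocycle} for the homomorphism property; the content of the theorem lies entirely in those two earlier results.
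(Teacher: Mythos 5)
Your proposal is correct and follows exactly the route the paper takes: the paper likewise derives the theorem directly from Theorem \ref{t:faithful} (injectivity, which also gives the well-definedness of $\circledast$) and Theorem \ref{t:cocycle} (the cocycle identity $F_{\ups}^{\ups'}F_{\ups'}=F_{\ups'\circ\ups}$), treating the rest as transport of structure. Your write-up merely makes explicit what the paper leaves as immediate.
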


\subsection{Monodromy of polylogarithms}
Identifying any $g \in \pi_1(\Pomt, \ora{01})$ with $\gamma \in PSL(2, \lvZ)$ via (\ref{e:ses}), as made explicit in \S \ref{sss:top}, 
the monodromy of $\Li(z, X_0,X_1)$ about $g$ is equal to $\Li^{\gamma}(z,X_0,X_1)$, since $\ol\gamma$ is trivial.  As a consequence of \ref{t:PSL.act} and \ref{t:cocycle}, determining the monodromy is now an easy calculation. For example, about the generators of the fundamental group, as first proven in \cite{MPvdH} by means of direct methods, we have
\begin{prop}
\label{p:mono}
The monodromy of $\Li(z, X_0,X_1)$ about the loop $r^2$ about 0 in $\Pomt$ based at $\ora{01}$ is given by
\[
\Li^{\rho^2}(z,X_0,X_1)
= 
\exp(2i \pi X_0)\Li(z,X_0,X_1),
\]
while that about the loop $sr^2s$ about 1 in $\Pomt$ is given by 
\[
\Li^{\sigma\rho^2\sigma}(z, X_{0}, X_{1}) = \Phi_{KZ}(X_0,X_1)^{\sigma}\exp(-2 \pi i X_1)
\Phi_{KZ}(X_{0},X_{1}) \Li(z, X_0, X_1).
\]
\end{prop}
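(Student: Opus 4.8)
The plan is to derive both monodromy formulas purely formally from the base cases in Corollary~\ref{c:qtqH} together with the cocycle property of Theorem~\ref{t:cocycle}, passing between the $PSL(2,\lvZ)$-action on $\Li$ and the quasi-automorphy factors $F_{\bullet}$ by Theorem~\ref{t:PSL.act}. The key structural observation is that both $\rho^2$ and $\sigma\rho^2\sigma$ lie in $\pi_1(\Pomt,\ora{01})$, so each reduces to the identity in $SL(2,\lvZ/2\lvZ)$: indeed $\sA(\rho^2)=\ol\rho^2=1$ and $\sA(\sigma\rho^2\sigma)=\ol\sigma\,\ol\rho^2\,\ol\sigma=\ol\sigma^2=1$. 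By Definition~\ref{d:ps} this means the residual substitution of $X_0,X_1$ attached to each element is trivial, so that $\Li^{\rho^2}$ and $\Li^{\sigma\rho^2\sigma}$ are honest analytic continuations, i.e. the actual monodromy; this is exactly the identification of monodromy with quasi-modularity recorded just above.

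For the first formula I would write $\rho^2=\rho\circ\rho$ and apply the cocycle relation $F_{\rho\circ\rho}=F_{\rho}^{\rho}\,F_{\rho}$. Since $\ol\rho$ is an involution fixing $X_0$ (from the table in \S\ref{S:red.act}, $\ol\rho X_0=X_0$), Definition~\ref{d:ps} gives $F_{\rho}^{\rho}=\exp(i\pi\,\ol\rho^{-1}X_0)=\exp(i\pi X_0)=F_\rho$. Hence $F_{\rho^2}=\exp(i\pi X_0)\exp(i\pi X_0)=\exp(2\pi i X_0)$, and Theorem~\ref{t:PSL.act} yields $\Li^{\rho^2}=\exp(2\pi i X_0)\Li$.

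For the second formula I would build $F_{\sigma\rho^2\sigma}$ up one generator at a time through the cocycle, recording $F_\sigma=\Phi_{KZ}(X_0,X_1)^{\sigma}$ and $F_{\rho^2}=\exp(2\pi i X_0)$. Because $\ol{\rho^2}=1$, the power-series action of $\rho^2$ is trivial, so $F_{\sigma}^{\rho^2}=F_\sigma$ and the cocycle gives $F_{\rho^2\circ\sigma}=F_{\sigma}^{\rho^2}F_{\rho^2}=\Phi_{KZ}(X_0,X_1)^{\sigma}\exp(2\pi i X_0)$. A second application with the outer $\sigma$ gives $F_{\sigma\rho^2\sigma}=(F_{\rho^2\circ\sigma})^{\sigma}F_\sigma$, and here the two evaluations $(\Phi_{KZ}(X_0,X_1)^{\sigma})^{\sigma}=\Phi_{KZ}(X_0,X_1)$ (using $\ol\sigma^2=1$) and $(\exp(2\pi i X_0))^{\sigma}=\exp(2\pi i\,\ol\sigma^{-1}X_0)=\exp(-2\pi i X_1)$ (using $\ol\sigma X_0=-X_1$) do all the work. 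Assembling these factors produces the associator conjugate of the local monodromy $\exp(-2\pi i X_1)$ at $z=1$, and Theorem~\ref{t:PSL.act} converts this quasi-automorphy factor into the claimed expression for $\Li^{\sigma\rho^2\sigma}$.

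The genuinely delicate point is the bookkeeping of the non-commutative multiplication order. The answer is a conjugate of $\exp(-2\pi i X_1)$ by the associator in which the two flanking factors arise by \emph{different} mechanisms — one as the base factor $F_\sigma=\Phi_{KZ}(X_0,X_1)^{\sigma}$, the other as $\Phi_{KZ}(X_0,X_1)$ produced by the substitution $(\cdot)^{\sigma}$ acting on the preceding cocycle factor — so one must track carefully which cocycle factor sits on which side, as well as the direction $\ups'\circ\ups$ in the cocycle identity of Theorem~\ref{t:cocycle}. Everything else is the routine evaluation of the two substitutions above, together with the remark that $\Phi_{KZ}(X_0,X_1)^{\sigma}=\Phi_{KZ}(X_0,X_1)^{-1}$, which lets one recognise the result as a true conjugation of the loop monodromy about $1$.
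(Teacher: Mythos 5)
Your proposal is correct and takes essentially the same route as the paper: both proofs reduce the monodromy to the cocycle identity of Theorem \ref{t:cocycle} applied to the base factors $F_{\rho}=\exp(i\pi X_0)$ and $F_{\sigma}=\Phi_{KZ}(X_0,X_1)^{\sigma}$ of Corollary \ref{c:qtqH}, using $\ol\rho X_0=X_0$ and $\ol\sigma X_0=-X_1$ (the paper expands all four generators of $\sigma\circ\rho\circ\rho\circ\sigma$ at once rather than grouping $\rho^2$ first, but the bookkeeping is identical). One remark: the factor order you obtain, $\Phi_{KZ}(X_0,X_1)\exp(-2\pi i X_1)\Phi_{KZ}(X_0,X_1)^{\sigma}$, coincides with what the paper's own computation produces, whereas the displayed statement of the Proposition has the two flanking associator factors transposed, so you should not claim it literally matches ``the claimed expression'' without noting that discrepancy.
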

\begin{proof}
Recall that the loop $r^2$ about 0 in $\Pomt$ based at $\ora{01}$ corresponds to $\rho^{2}$ in $PSL(2, \lvZ)$. Now  
\begin{align*}
\Li^{\rho^2}(z, X_0,X_1)
&=
F_{\rho}(X_0,X_1)^{\rho}F_\rho(X_0,X_1)\Li(z,  X_0,X_1)
\\
&=
\exp(\pi i X_0)^{\rho}\exp(\pi i X_0) \Li(z, X_0,X_1)
\\
&=
\exp(2 \pi i X_0) \Li(z, X_0,X_1)
\end{align*}
since ${\ol\rho} (X_0) = X_0$.

The loop about 1 in $\Pomt$ based at $\ora{01}$ is $sr^2s$ in former notation, corresponding to 
$\sigma \rho^2\sigma$ in $PSL(2, \lvZ).$ 
\begin{align*}
&\Li^{\sigma\rho^2\sigma}(z, X_{0}, X_{1}) 
\\
&=
F_{\sigma}(X_0,X_1)^{\sigma\rho^2}
F_{\rho}(X_0,X_1)^{\sigma\rho}
F_{\rho}(X_0,X_1)^{\sigma}
F_{\sigma}(X_0,X_1)\Li(z, X_0,X_1)
\\
&=
\Phi_{KZ}(X_0,X_1)
\exp(-\pi i X_1)
\exp(-\pi iX_1)
\Phi_{KZ}(X_1,X_0)^{\sigma}
\Li(z, X_0,X_1)\\
%
&
\;\;\;\text{from the definition of the respective actions of $\ol\sigma$ and $\ol\rho$ on $X_0,X_1.$}
\end{align*}
\end{proof}

\section{Application: proving the analytic continuation and functional equation of $\zeta(s)$}

As outlined in the introduction, data encoded in the modular action 
on $\Li(z, X_0,X_1)$ may be used to give a family of proofs of the analytic continuation and functional equation of the {\sc Riemann} zeta function. In particular, the analytic continuation may be effected (as in \S \ref{s:AC} below) using functional relations known as the {\sc Euler} connection formulae. As shown in Proposition 5 of \cite{OU}, these arise from equating coefficients of the respective sides of 
\[
\Li(1-z, -X_1,-X_0) = \Phi_{KZ}(-X_1,-X_0)\Li(z, X_0,X_1).
\]
This equation giving $\Li^{\ol\sigma}(z, X_0,X_1)$ is necessarily the same as 
\[
\Li^{\sigma}(z, X_0,X_1) = \Phi_{KZ}(-X_1,-X_1)\Li(z, X_0,X_1)
\]
since $\sigma$ and the reduction thereof are both involutions. 

Monodromy data of polylogarithm functions is used to prove the functional equation itself in \S \ref{fe}. As shown in Proposition \ref{p:mono} above, such information is given by an equation which is an easy consequence of the modular action computed in Corollary \ref{c:qtqH}.

The proofs are modifications of {\sc Riemann's} contour integral method, each based on a member of an infinite family of integral expressions for $\zeta(s)$ given in \S \ref{Intro}.


\subsection{Families of integral expressions for $\zeta(s)$}
\label{Intro}

In previous work, \cite{Joyner:ii}, the author developed a theory of complex iterated integral generalizing the usual notion of iterated integral as in the work of {\sc Chen}. In particular, on $\Pomt$, if $F(z)$ denotes a function with $F(0)=0$ having {\sc Taylor} series expansion on the unit disc for which the $n$th coefficient is $O(n^{k})$ for some $k\geq 0$, then we define
\begin{equation}
\label{defCII}
L[F](s,z)=\int_{[0,z]}F(t)\left(\frac{dt}{t}\right)^{s}
:=
\int_{0}^{z}\frac{(\log z-\log t)^{s-1}}{\Gamma(s)}F(t) \frac{dt}{t},
\end{equation}
where the usual regularization of the logarithm at zero is understood, in which case the integral can be shown to converge on $\Re s>k+1.$ In what follows
we shall write $L[F](s):=L[F](s,1)$ and say that the functions $F(z)$  satisfying the conditions given above are $k$-{\sc Bieberbach}.

This complex iterated integral turns out to coincide under the change of variables $x=-\log t$ with the fractional integral as defined by {\sc Riemann} and {\sc Liouville}; for which the {\em additive iterativity} property 
\begin{equation}
\label{addit}
L[F](s) = L\left[ \int_{[0,z]}F(t)\left(\frac{dt}{t}\right)^{w}\right](s-w)
\end{equation}
holds for those $w$ for which all relevant integrals converge. ($w$ should have $\Re w>k+1$ and $\Re (s-w) >k+1.$)
Although this much is classical, the iterated integral perspective lends itself to powerful generalization 
and has various number theoretic consequences (see \cite{Joyner:ii}), among them the non-classical {\em multiplicative iterativity} property
\[
\int_{[0,1]}\sum_{n=1}^{\infty}a_{n}t^{n}\left(\frac{dt}{t}\right)^{s}
=
\int_{[0,1]}\sum_{n=1}^{\infty}a_{n}t^{n^{k}}\left(\frac{dt}{t}\right)^{{s}/{k}}
\]
for positive integer $k$. 

Each of these respective iterativity properties gives rise to an infinite family of integral expressions for  the {\sc Riemann} zeta function $\zeta(s)$:

In $\Pomt$ coordinates {\sc Abel's} integral for $\zeta(s)$ becomes
\begin{equation}
\label{AI}
\zeta(s)
= 
\int_{[0,1]}\frac{t}{1-t}\left(\frac{dt}{t}\right)^{s}
=L\left[ \frac{t}{1-t} \right](s) 
\end{equation}
and hence by (\ref{addit}) may be expressed by any one of the family of integrals
\begin{equation}
\label{zetaPL}
\zeta(s) 
=
\int_{[0,1]}\Li_{\mu}(t)\left(\frac{dt}{t}\right)^{s-\mu}
\end{equation}
for integer $\mu$, with $\Li_{\mu}(z)$ denoting  the usual polylogarithm function 
\[
\Li_{\mu}(z):= \int_{[0,z]} \frac{t}{1-t}\left(\frac{dt}{t}\right)^{\mu}
\]
when $\mu >0$.
On the other hand, by multiplicative iterativity and use of {\sc Abel's} integral, for positive integer $k$
\[
\zeta(s) = 
\int_{[0,1]}\sum_{n=1}^{\infty}t^{n^{k}}\left(\frac{dt}{t}\right)^{{s}/{k}}.
\]
Observe that the case of $k=2$ corresponds to the theta function integral used in {\sc Riemann's} second proof of the functional equation of $\zeta(s)$, under the change of variables $t=e^{-\pi u}.$


Since {\sc Abel's} integral (\ref{AI})
(which forms the basis of {\sc Riemann's} first proof of the functional equation) belongs to both families of integrals it is interesting to exhibit a proof
of the functional equation 
making use of an integral which is a member of the additive family of integrals but not of the multiplicative family. To this we next proceed.


\subsection{Analytic continuation at integer parameter $\mu > 1$}
\label{s:AC}
Consider the dilogarithm integral
\[
\zeta(s) =\int_{[0,1]}\Li_{2}(z)\left(\frac{dz}{z}\right)^{s-2} =  \int_{0}^{\infty}\Li_{2}(e^{-x})\frac{x^{s-2}}{\Gamma(s-2)}\frac{dx}{x},
\]
(where we take $x=-\log z$ to obtain the last integral). For the analytic continuation of this integral to complex values of $s$ for which $\Re s \leq 2$ the standard ({\sc Hankel}) contour $C$ (as pictured below) 
is not suitable with this integrand, because the dilogarithm monodromy term arising from moving about 0 is $2 \pi i x,$ which does not have finite {\sc Mellin} transform. 
\begin{center}\begin{tikzpicture}
\node (a1) at (-0.4, 0.3) {$C$};
\draw [->] (5,0.1) -- (2.5,0.1);
\draw  (2.5, 0.1) -- (0.1, 0.1);
\draw (5,0) -- (-1, 0);
\draw (2, -0.1) -- (5, -0.1);
\draw [->] (0.1, -0.1) -- (2, -0.1);
\draw (0.1, 0.1) arc (45:315 : 0.15);
\filldraw (0,0) node [below=1.5pt] {0} circle (1.5pt);
\filldraw (5.2,0) node [below=1pt] {$+\infty$} circle (1.5pt);
\end{tikzpicture}\end{center}

Instead we use {\sc Euler's} dilogarithm inversion formula, which is a functional equation of the dilogarithm effecting a change between $z=0$ and $z=1.$ This shifts the monodromy of $\Li_{2}(e^{-x})$ to $x=+\infty$, so that {\sc Hankel's} contour may be used.

Now write $\Li_{2,1 \times (n-2)}(z) := \Li_{211\ldots 1}(z)$ with the index $1$ repeated
$n-2$ times; i.e. the multiple polylogarithm
\[
\Li_{2{{{11\ldots1}}}}(z) = \int_{[0,z]}\frac{dt}{1-t}\frac{dt}{t}{{\frac{dt}{1-t}
\frac{dt}{1-t} \cdots \frac{dt}{1-t}}}
\]
in which the form $\frac{dt}{1-t}$ occurs a total of $n-1$ times.
By using a generalized version of the dilogarithm inversion formula, we find in general:
\begin{thm}
For each integer $m \geq 2,$ 
\[
\zeta(s) = -\frac{(s-m)\Gamma(m)\Gamma(1-s)}{2\pi i}\int_{C}
\frac{\Li_{2,1 \times (m-2)}(1-e^{-x})}{x^{m}}(-x)^{s}\frac{dx}{x}
\]
for all complex $s \neq 1$ satisfying $\Re s<m.$
\end{thm}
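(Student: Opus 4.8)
The plan is to derive the formula from the Hankel-type contour integral (\ref{CI}) underlying {\sc Riemann's} first proof, by trading the singular integrand $\frac{z}{1-z}$ for the genuinely regular function $\Li_{2,1\times(m-2)}(1-e^{-x})$. First I would pass to the logarithmic coordinate $x=-\log z$, in which {\sc Abel's} integral (\ref{AI}) becomes (\ref{CI}) in the form
\[
\zeta(s)=\frac{\Gamma(1-s)}{2\pi i}\int_{C}\frac{e^{-x}}{1-e^{-x}}\,(-x)^{s-1}\,dx ,
\]
valid for all $s\neq1$, where $C$ is the image of the contour $A$ and now encircles the positive real axis (the branch cut of $(-x)^{s-1}$), while $\frac{e^{-x}}{1-e^{-x}}=\frac{z}{1-z}=\frac{1}{e^{x}-1}$.

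The engine of the argument is the differo-integration identity
\[
\left(\frac{d}{dx}\right)^{m-1}\Li_{2,1\times(m-2)}(1-e^{-x})=\frac{x\,e^{-x}}{1-e^{-x}}=\frac{x}{e^{x}-1}.
\]
I would prove it from the iterated-integral differentiation rule $dF=\beta_{r}\int\beta_{1}\cdots\beta_{r-1}$ of \S\ref{S:KZ}: writing $w=1-e^{-x}$, so that $\frac{dw}{dx}=e^{-x}=1-w$, each $x$-derivative peels the outermost form $\frac{dw}{1-w}$ off $\Li_{2,1\times(m-2)}$ while the factor $1-w$ cancels the resulting $\frac{1}{1-w}$, lowering the depth by one; after $m-2$ derivatives one reaches $\Li_{2}(1-e^{-x})$, and one further derivative, via $\frac{d}{dw}\Li_{2}(w)=-\frac{\log(1-w)}{w}$, produces $\frac{x}{e^{x}-1}$. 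Equivalently, $\Li_{2,1\times(m-2)}(1-e^{-x})$ is an $(m-1)$-fold antiderivative in $x$ of $x/(e^{x}-1)$, which is precisely the antidifferentiation direction $L\big[\int(\,\cdot\,)(dz/z)^{m}\big](s-m)$ of the Lemma preceding the theorem and of the family (\ref{zetaPL}).

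Substituting this identity into the displayed contour integral and integrating by parts $m-1$ times along the closed contour $C$ transfers the $m-1$ derivatives from $\Li_{2,1\times(m-2)}(1-e^{-x})$ onto $(-x)^{s-1}/x=-(-x)^{s-2}$. Since $\frac{d}{dx}(-x)^{a}=-a(-x)^{a-1}$, this simultaneously lowers the exponent to $(-x)^{s-m-1}$ (reconstituting the factor $x^{-m}$ of the statement) and generates a falling-factorial constant $(s-2)(s-3)\cdots(s-m)=\Gamma(s-1)/\Gamma(s-m)$; collecting the resulting $\Gamma$-factors, together with the $\Gamma(1-s)$ inherited from {\sc Riemann's} representation and the reflection formula, gives the prefactor of the statement. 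The range $\Re s<m$ is then forced by convergence at $x\to+\infty$: there $1-e^{-x}\to1$, so $\Li_{2,1\times(m-2)}(1-e^{-x})\to\zeta(2,1\times(m-2))$ is finite and the integrand decays like $x^{\,\Re s-m-1}$, which is integrable exactly when $\Re s<m$.

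The main difficulty is twofold. Conceptually, one must justify that $C$ may legitimately encircle $x=0$, and this rests on the fact that $\Li_{2,1\times(m-2)}(1-e^{-x})$ is single-valued and vanishes to order $m-1$ there: its only branch point, coming from the singularity of the multiple polylogarithm at argument $1$, has been transported out to $x=+\infty$ (where $1-e^{-x}\to1$). This monodromy-shifting is the exact iterated-integral analogue of the appeal to {\sc Euler's} inversion formula in the dilogarithm case discussed in \S\ref{s:AC}. Technically, one must check that the boundary terms in the repeated integration by parts vanish; because the intermediate antiderivatives $\big(\frac{d}{dx}\big)^{j}\Li_{2,1\times(m-2)}(1-e^{-x})=\Li_{2,1\times(m-2-j)}(1-e^{-x})$ tend to nonzero zeta values rather than to $0$ as $x\to+\infty$, the vanishing holds only in a substrip of $1<\Re s<m$, whence the identity is propagated to the full range $\Re s<m$, $s\neq1$, by analytic continuation.
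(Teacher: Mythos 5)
Your differo-integration identity is correct: with the paper's conventions each $x$-derivative strips the trailing form $\frac{dw}{1-w}$ against $\frac{dw}{dx}=1-w$, so indeed $\left(\frac{d}{dx}\right)^{m-1}\Li_{2,1\times(m-2)}(1-e^{-x})=x/(e^{x}-1)$. But the step ``collecting the resulting $\Gamma$-factors \ldots\ gives the prefactor of the statement'' fails, and it cannot be repaired. Transferring the $m-1$ derivatives onto $(-x)^{s-2}$ produces the falling factorial $(s-2)(s-3)\cdots(s-m)=\Gamma(s-1)/\Gamma(s-m)$, a polynomial of degree $m-1$ in $s$, whereas the statement's prefactor $(s-m)\Gamma(m)$ is linear in $s$; the two coincide only for $m=2$. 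What your integration by parts actually proves, on the strip where the boundary terms vanish and then by continuation, is
\[
\Gamma(s)\zeta(s)=(-1)^{m-1}(s-2)\cdots(s-m)\int_{0}^{\infty}\Li_{2,1\times(m-2)}(1-e^{-x})\,x^{s-m-1}\,dx,
\]
which for every $m\geq 3$ is a different identity from the one asserted. (As a consistency check: near $x=0$ one has $\Li_{2,1\times(m-2)}(1-e^{-x})=x^{m-1}/(m-1)!+O(x^{m})$, so the integrand of the contour integral has residue $1/(m-1)!$ at $x=0$, and it is the falling-factorial constant, not $(s-m)\Gamma(m)$, that reproduces $\Res_{s=1}\zeta(s)=1$.) The paper's own proof is an entirely different argument: it continues the {\sc Mellin} integral of $\Li_{m}(e^{-x})$ into $m-1<\Re s<m$ by the {\sc Titchmarsh} subtraction of $\zeta(m)/x$, invokes the {\sc Euler} connection formula (the $\ol\sigma$-quasi-modularity, which is the whole point of the section) to trade $\Li_{m}(e^{-x})-\zeta(m)$ for $-\Li_{2,1\times(m-2)}(1-e^{-x})$ plus lower polylogarithms, reabsorbs those into $\binom{s-1}{m-1}\zeta(s)$ by a hockey-stick summation, and only then passes to the {\sc Hankel} contour. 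Your route bypasses the connection formula entirely, and the mismatch of constants for $m\geq 3$ must be confronted rather than waved through.

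A second, independent error undermines your convergence range and boundary-term analysis. With the paper's conventions the outermost exponent of $\Li_{2,1\times(m-2)}$ is $1$, so $\sum_{0<m_1<\cdots<m_{m-1}}z^{m_{m-1}}/(m_1^{2}m_2\cdots m_{m-1})$ diverges at $z=1$: the value $\zeta(2,1\times(m-2))$ you appeal to does not exist for $m\geq 3$. In fact $\Li_{2,1\times(m-2)}(1-e^{-x})\sim \zeta(2)\,x^{m-2}/(m-2)!$ as $x\to+\infty$, so the integrand decays only like $x^{\Re s-3}$ rather than $x^{\Re s-m-1}$, the tails of $C$ converge only for $\Re s<2$ rather than $\Re s<m$, and the intermediate antiderivatives $\Li_{2,1\times(k)}(1-e^{-x})$ with $k\geq 1$ are unbounded rather than tending to zeta values. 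Only the case $m=2$ --- a single integration by parts, with $\Li_{2}(1-e^{-x})\to\zeta(2)$ genuinely bounded at infinity --- survives your analysis intact and recovers the stated formula.
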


For each fixed $m$, together with (\ref{zetaPL}) this result gives the analytic continuation of $\zeta(s)$ to all values other than $\{1\}\cup\{\Re s=m \}$.

\begin{proof} Throughout let $m$ denote an integer with $m \geq 2.$ Then
\begin{align*}
\zeta(s)
&=
\int_{0}^{1}\Li_{m}(e^{-x})\frac{x^{s-m}}{\Gamma(s-m)}\frac{dx}{x} 
+ 
\int_{1}^{\infty}\Li_{m}(e^{-x})\frac{x^{s-m}}{\Gamma(s-m)}\frac{dx}{x}\\
&=
\int_{0}^{1}\left[ \frac{\Li_{m}(e^{-x})}{x} - \frac{\zeta(m)}{x}\right]
\frac{x^{s-m}}{\Gamma(s-m)}{dx}
+
\frac{\zeta(m)}{(s-m)\Gamma(s-m)} 
+ \int_{1}^{\infty}\Li_{m}(e^{-x})\frac{x^{s-m}}{\Gamma(s-m)}\frac{dx}{x}.
\end{align*}
This last expression holds also for $m-1 < \Re s <m$ by analytic continuation, since $\Li_{m}(e^{0})=\zeta(m).$ But on this vertical strip, 
\[
\frac{1}{s-m}=-\int_{1}^{\infty}x^{s-m-1}dx.
\]
Consequently, provided that $m-1< \Re s < m,$ we can write
\begin{equation}
\label{Ti}
\zeta(s) = \int_{0}^{\infty}\left(\frac{\Li_{m}(e^{-x}) - \zeta(m)}{x}\right)
\frac{x^{s-m}}{\Gamma(s-m)}dx.
\end{equation}
(This much is patterned on a similar analytic continuation in \cite{Titch}.)

Now from  (45) in \cite{OU} using exponential coordinates and $\Li_{1}(z) = -\log(1-z)$,  {\sc Euler's} connection formula takes the form of
\[
\Li_{m}(e^{-x}) - \zeta(m) = -\Li_{2,1\times(m-2)}(1-e^{-x})
-\frac{x^{m-1}}{(m-1)!}(-\log(1-e^{-x}))
\]
\[
-\frac{x^{m-2}}{(m-2)!}\Li_{2}(e^{-x})
-
\ldots -\frac{x^{2}}{2!}\Li_{m-2}(e^{-x})
 - x\Li_{m-1}(e^{-x}).
\]
In substituting this expression into (\ref{Ti}) we notice immediately that all other polylogarithm integral expressions as in (\ref{zetaPL}) 
with $\mu=1, \ldots, m-1$ appear. These expressions are also valid for $m-1<\Re s <m$, so in each case, the resulting expression may be replaced by some multiple of $\zeta(s).$ We show this explicitly when $m=2:$ Then,
\begin{equation}
\label{0}
\zeta(s) = \int_{0}^{\infty} \left(\frac{\log(1-e^{-x})}{x} - \frac{\Li_{2}(1-e^{-x})}{x^{2}}\right)\frac{x^{s-1}}{\Gamma(s-2)}{dx}
\end{equation}
whenever $1<\Re s<2.$ Here, for $\Re s >1$ we have
\[
\zeta(s) = \int_{[0,1]}\Li_{1}(z)\left(\frac{dz}{z}\right)^{s-1} = -\int_{0}^{\infty}\log (1-e^{-x})\frac{x^{s-1}}{\Gamma(s-1)}\frac{dx}{x}.
\]
From  $(s-2)\Gamma(s-2) = \Gamma(s-1)$ it then follows that
\begin{equation}
\label{1}
\int_{0}^{\infty}\frac{\log (1-e^{-x})}{x}\frac{x^{s-1}}{\Gamma(s-2)}dx = -(s-2)\zeta(s),
\end{equation}
for $1<\Re s<2.$ Then using (\ref{1}) in (\ref{0}), 
\[
(s-1)\zeta(s)  = -\int_{0}^{\infty}\frac{\Li_{2}(1-e^{-x})}{x^{2}}\frac{x^{s-1}}{\Gamma(s-2)}dx.
\]

Most generally, repeated use of the functional equation 
\begin{equation}
\label{Gfe}
\Gamma(r+1) =r\Gamma(r)
\end{equation}
together with (\ref{zetaPL}) shows that for each integer $k$ with $1 \leq k \leq m-1$,
\begin{align*}
-\int_{0}^{\infty}\frac{x^{m-k}\Li_{k}(e^{-x})}{(m-k)!x}\frac{x^{s-m}}{\Gamma(s-m)}dx
& = -\frac{(s-m) \ldots (s-k-1)}{(m-k)!}\zeta(s) 
\\
&= -
\left(\begin{array}{c}
s-k-1 \\ m-k\end{array}
\right)\zeta(s).
\end{align*}
Adding the negative of such expressions to both sides of our equation for $\zeta(s)$  (found by substitution of the {\sc Euler} connection formula into (\ref{Ti})) and using a simple inductive argument to add up the terms of the coefficient, (adding first $1+(s-m)$ to obtain $s-m+1$ then taking this as a common factor in summing with the next term and so on), the left side becomes
\[
\left[1+ \left(\begin{array}{c}
s-m \\ 1
\end{array}
\right)
+ \ldots + \left(\begin{array}{c}s-2 \\ m-1
\end{array}\right)
\right]
\zeta(s)
=
\left(\begin{array}{c}
s-1 \\ m-1
\end{array}\right)\zeta(s)
\]
while the right side is given by
\[
-\int_{0}^{\infty} \Li_{2,1\times (m-2)}(1-e^{-x})\frac{x^{s-m}}{\Gamma(s-m)}
\frac{dx}{x}.
\]

By equating coefficients of the first equation of \ref{p:mono}, one sees that $\Li_{2, 1 \times(m-2)}(z)$ has no monodromy about $z=0$. Thus, $\Li_{2, 1\times(m-2)}(1-e^{-x})$ has no monodromy about $x=0.$ Now with $C$ as above, consider 
\[
I(s):= \int_{C}\frac{\Li_{2, 1\times(m-2)}(1-e^{-x})}{x^{m}}(-x)^{s-1}{dx}.
\]
Here the branch cut for the logarithm is taken along the negative real axis, so that for the portion of $C$ above the real axis from $x=+\infty$ to $x=0$, 
\[
(-x)^{s} = e^{s(\log x -i\pi)}
\]
and along the part of $C$ below the real axis back from 0 to $+\infty,$
\[
(-x)^{s} = e^{s (\log x + i\pi)}.
\]

Now along the arc, say with $|x| = \varepsilon$, which is the piece of $C$ around $x=0,$ the integrand is bounded by 
\[
M{\varepsilon}
\left|
x^{(\Re s) - m}\right|e^{2 \pi \varepsilon}
\] 
for some constant $M>0$
because $\Li_{2, 1\times(m-2)}(1-e^{-x})$ vanishes at $x=0$ at least to the same order as does $x.$ Now since $\Re s >m-1,$ the integral about $|x| = \varepsilon$ approaches 0 as $\varepsilon$ becomes very small. (The integral is of the order of $\varepsilon ^{\Re s-m+1}.$) 

Consequently, in the limit as $\varepsilon$ approaches 0, we have
\begin{align*}
I(s) &\rightarrow 
-e^{-i\pi s}\int_{\infty}^{0}\frac{\Li_{2, 1\times(m-2)}(1-e^{-x})}{x^{m}}
x^{s}\frac{dx}{x}\\
&\;\;\;
-
e^{i \pi s}\int_{0}^{\infty}\frac{\Li_{2,1\times(m-2)}(1-e^{-x})}{x^{m}}
x^{s}\frac{dx}{x}
\\
&=
(e^{i\pi s} -e^{-i \pi s}) \left(\begin{array}{c}s-1\\ m-1\end{array}\right)\Gamma(s-m)\zeta(s).
\end{align*}

Now $
2i\sin (\pi s) = 
e^{i\pi s} -e^{-i \pi s}$ and we may repeatedly use (\ref{Gfe}) to see that 
\[
\left(\begin{array}{c}s-1 \\ m-1\end{array}\right)\Gamma(s-m)
=
\frac{\Gamma(s)}{(s-m) \cdot (m-1)!}
=
\frac{\Gamma(s)}{(s-m) \Gamma(m)}.
\]
Moreover,
\[
\frac{\pi}{\sin{\pi s}} = \Gamma(s)\Gamma(1-s),
\]
so that 
\begin{align*}
(e^{i \pi s}- e^{-i \pi s})\left(\begin{array}{c}s-1 \cr m-1\end{array}\right)\Gamma(s-m)
&=
2i \sin(\pi s)\frac{\Gamma(s)}{(s-m)\Gamma(m)}
\\
&=
\frac{2 i \pi }{(s-m)\Gamma(m)\Gamma(1-s)}.
\end{align*}
Hence
\begin{equation}
\label{2}
\zeta(s) = \frac{(s-m)\Gamma(m)\Gamma(1-s)}{2 \pi i}\int_{C}
\frac{\Li_{2, 1\times(m-2)}(1-e^{-x})}{x^{m}}(-x)^{s-1}{dx}.
\end{equation}
This expression has been proven for $m >\Re s>m-1,$ but converges for all $s$ having $\Re s < m$ with the possible exception of the poles $s = 1, \ldots,  m-1$ of $\Gamma(1-s)$ in this region of the plane. (When $\Re s \geq m,$ the integrand exhibits unsuitable behavior at infinity.) However,
for $s= 2, \ldots,  m-1$ the integral vanishes by the usual argument: there is no monodromy about 0, so the integrals above and below the real line differ by a factor of $-1$ and approach the same absolute value in the limit as the contour approaches the real line; while near zero, one computes by {\sc L'H{\^o}pital's} Rule that
\[
\lim_{x\rightarrow 0}
\frac{\Li_{2,1\times(m-2)}(1-e^{-x})}{x^{m}}
=
\lim_{x\rightarrow 0}
\frac{(-1)^{m-1}}{m!x}.
\]
This leaves only the simple pole at $s=1,$ for which we may compute the residue using (\ref{2}). Firstly, 
\[
\Res_{s=1}\Gamma(1-s) = -1, 
\]
so
\begin{align*}
\Res_{s=1}\zeta(s) &= 
\lim_{s=1}(s-1)\Gamma(1-s)
\frac{2 \pi i}{(m-1)!(m-1)}\frac{(s-m)\Gamma(m)}{2\pi i}\\
&=
(-1) \frac{(1-m)(m-1)!}{(m-1)!(m-1)} = 1
\end{align*}
as is well known.

The analytic continuation for $\zeta(s)$ to $\Re s<m$ is achieved by (\ref{2}). 
\end{proof}

\subsection{Dilogarithm proof of the functional equation}\label{fe}
In principle, we may now imitate {\sc Riemann's} contour integral proof using each of the integrals of (\ref{2}). For each $m \geq 2$ this would be done by using the monodromy of $\Li_{2, 1\times (m-2)}(z)$, as may be  calculated using \ref{p:mono} 
(by equating the coefficients of terms of the respective power series).
The interesting aspect of the computation is that the monodromy terms coming from considering $\Li_{2,1\times(m-2)}(1-e^{-x})$ around $\infty$ have themselves monodromy about 
$2\pi i n$ for integer $n$, and this monodromy of the monodromy is what contributes terms that add up to $\zeta(1-s)$ multiplied by some factor.

All such proofs follow the same pattern, so for clarity of exposition, we restrict ourselves to the case that $m=2$ and present a careful proof in this situation:

Consider the punctured surface $X_{\log} \iso \lvC \backslash 2 \pi i \lvZ$ on which the logarithm function is single valued. This is the appropriate space to which to lift the contour $C$ in order to emulate {\sc Riemann's} calculation. 
However, it suffices for our purposes to replace this space by $\lvC$ with a logarithmic branch cut along the negative real axis. 
Now let $a_0\rightarrow a_1$ denote the straight line segment from $a_0$ to $a_1$ in $\lvC,$ and $a_0\rightarrow a_1\rightarrow\cdots \rightarrow a_n$ the succession of line segments from $a_0$ to $a_1$, $a_1$ to $a_2$ etc. on to $a_n$. If $P$ is a path and $u\in \lvC,$ write $P+u$ for the translation of $P$ by $u$. Next, fix small positive $\varepsilon$ and $\delta$ and real $R$ and $N$ with $R,N \gg 0.$ Denote the circular path of radius $\eta = \sqrt{\delta^2+\varepsilon^2}$ about 0 in the clockwise direction from $\delta-i\varepsilon$ to $\delta+i\varepsilon$ by $\gamma_0.$ Then set $C_0=(R-i\varepsilon \rightarrow \delta-i\varepsilon) \cdot \gamma_0 \cdot (\delta+i\varepsilon \rightarrow R+i\varepsilon)$ where $\;\cdot\;$ denotes concatenation of paths. Also, write $L_\infty$ for the line segment $R+i\varepsilon \rightarrow R+2\pi i  -i \varepsilon.$ Finally, let $C_{R, N}^{\delta, \varepsilon}$ denote the following closed path:
\[
C_0 \cdot L_{\infty} \cdot (C_0+2\pi i)\cdot (L_{\infty}+2\pi i)\cdots (C_0+2\pi i N) \cdot (R+2\pi iN+i \varepsilon \rightarrow R+2\pi iN+\pi i
\]
\[
\rightarrow -R+i\pi (2N+1)\rightarrow -R-i\pi(2N+1)
\rightarrow R-i\pi(2N+1) \rightarrow R-2\pi iN -i \varepsilon)
\]
\[
\cdot (C_0 - 2\pi iN)\cdot (L_{\infty}-2\pi i(N-1))\cdots (C_0-2\pi i)\cdot (L_{\infty}-2\pi i).
\]

Now  $\Li_{2}(1-e^{-x})$ has  the monodromy terms 
$2 \pi i{\log(1-e^{-x}})$  along the paths $R+2k\pi i\rightarrow R+2(k+1)\pi i$ (for integer $k$), since $\Li_{2}(z)$ has monodromy term $-2\pi i\log z$ moving in a positive direction around $z=1$; and as $x$ rises by $2 \pi i$ in $\lvC,$ $1-e^{-x}$ describes a negatively oriented circle about $x=1$. These logarithmic terms themselves have monodromy of $2\pi i$ about $2 k \pi i$ for integer $k$, because $1-e^{-x} = 0$ at such points, and for $\eta$ sufficiently small,  a negatively oriented loop of radius $\eta$ about $2 k\pi i$ maps to a positively oriented path encircling $x=0$ under $x \mapsto 1-e^{-x}$.

Hence, the region enclosed by such a path $C_{R,N}^{\delta, \varepsilon}$ contains none of the points at which
the integrand (\ref{2}) nor the monodromy terms are singular, for $\Re s<0.$ By {\sc Cauchy's} Theorem then, 
\[
\frac{(s-2)\Gamma(1-s)}{2 \pi i}\int_{C_{R,N}^{\delta, \varepsilon}}\frac{\Li_{2}(1-e^{-x})}{x^{2}}
(-x)^{s}\frac{dx}{x} =0.
\]
We proceed to compute this integral, under the assumption that $\Re s<-2.$

Firstly, we know that in the limit as $\varepsilon$ and $\delta$ approach 0 and $R$ nears $\infty$, the integral along $C_{0}$ tends to $-\zeta(s).$ Next we have 
\[
\frac{(s-2)\Gamma(1-s)}{2 \pi i}\int_{R+i \varepsilon}^{R+2 \pi i - i \varepsilon}\frac{\Li_{2}(1-e^{-x})}{x^{2}}
(-x)^{s}\frac{dx}{x},
\]
but passage along this line segment produces a monodromy term from the dilogarithm, of
\begin{equation}
\label{monod}
\frac{2 \pi i\log(1-e^{-x})}{x^{3}}
(-x)^{s}
\end{equation}
which must be taken into account along all subsequent paths.

Now the dilogarithm integral along the translate $C_{0}+2\pi i$ approaches 0 as $\varepsilon$ tends to 0 by {\sc Cauchy's} Theorem. In particular, notice that $(-x)^{s}$ has the same value on the straight line segments of this part of the path, so that the integral of the monodromy term involving $\log (1-e^{-x})$ also vanishes along these straight line segments. What remains to consider then from this portion of $C_{R,N}^{\delta, \varepsilon}$ is
\[
\frac{(s-2)\Gamma(1-s)}{2\pi i}
\int_{\gamma_{1}}
\frac{2 \pi i\log(1-e^{-x})}{x^{2}}
(-x)^{s}\frac{dx}{x}
\]
where $\gamma_{k}$ will denote the (negatively oriented) loop about $2 \pi i k$ for integer $k$, along with 
\[
(s-2)\Gamma(1-s)(2 \pi i)
\int_{2\pi i +\delta + i \varepsilon}^{R+2\pi i + i \varepsilon}
(-x)^{s-2}\frac{dx}{x}, 
\]
the term arising from the monodromy of $\log(1-e^{-x})$ about $2 \pi i.$ As before, this last integrand must be considered along all subsequent subpaths of $C_{R,N}^{\delta, \varepsilon}$. But notice that along the remaining translates of $C_{0}$, this monodromy term (from passage around $2 \pi i$) is 0 (again in the limit as $\varepsilon \rightarrow 0$) by {\sc Cauchy's} Theorem.

Now let $D_{R, n}$ denote the rectangular path 
$R+ 2 n\pi i \rightarrow R+(2N+1)\pi i \rightarrow -R+(2N+1) \pi i \rightarrow -R - (2N+1)\pi i \rightarrow R-(2N+1) \pi i \rightarrow R- 2 n\pi i$ for non-negative integer $n \leq N.$ 
Continuing along $C_{R,N}^{\delta, \varepsilon}$ we find thus that the integrals which are yet to be computed add to
\begin{align*}
&
\frac{(s-2)\Gamma(1-s)}{2 \pi i}\int_{D_{R,0}}\frac{\Li_{2}(1-e^{-x})}{x^{2}}
(-x)^{s}\frac{dx}{x}
\\
&+
\sum_{n=1}^{N}
\frac{(s-2)\Gamma(1-s)}{2\pi i}
\int_{\gamma_{n}}
\frac{n2 \pi i\log(1-e^{-x})}{x^{2}}
(-x)^{s}\frac{dx}{x}
\\
&+\frac{(s-2)\Gamma(1-s)}{2 \pi i}
\sum_{n=1}^{N}
\left\{
\int_{2n\pi i +\delta + i \varepsilon}^{R+2n\pi i + i \varepsilon}\!\!\!\!\!\!\!
n(2 \pi i)^{2}(-x)^{s-2}\frac{dx}{x}\right.
\\
&
+\left.
\int_{D_{R,n}}\sum_{m=1}^{n}m(2 \pi i)^{2}
(-x)^{s-2}\frac{dx}{x}
\right\}
\end{align*}
along with terms with integrand of the form of (\ref{monod}) integrated along the outside contour.

Now along the portion of $C_{R,N}^{\delta, \varepsilon}$ proceeding from the point $-R+(2N+1)\pi i$, further monodromy terms arise from the dilogarithm terms, in this instance the negatives of terms of the form of (\ref{monod}). All such terms themselves exhibit monodromy each time the path traverses a segment of length $2\pi i$ along the line $-R +2N\pi i \rightarrow -R -2N\pi i$ since images of such segments trace out circles of radius $e^{R}$ about $z=1$ under the mapping $x \mapsto 1-e^{-x}=z$.  

These new dilogarithm monodromy terms cancel out those from the first vertical portion of the path, so that all such terms add to zero by when the point $-R$ is reached along $C_{R,N}^{\delta, \varepsilon}.$ 
Below the real axis, negative terms accumulate so that once one reaches the point $-R-(2N+1)\pi i$, the remaining dilogarithm monodromy terms add to
\[
-2 \pi i N \log (1-e^{-x}).
\]
On the other hand, since the number of logarithmic terms as one moves along $-R+i \alpha$ (for real decreasing $\alpha$) decreases from $N$ to $N-1$ to $N-2$ and so on, the sum of the logarithmic monodromy terms number successively $N(N+1)/2; N(N+1)/2+N;$ then $N(N+1)/2 +N+(N-1)$ and so on, until at the point $-R,$ there are $N(N+1)$ such terms. Thereafter, the increasing number of negative logarithmic terms decrease the total number of these second monodromy terms. Eventually, at $-R-(2N+1)\pi i,$ the end of the vertical line, the terms which remain sum to
\[
(2 \pi i)^{2}[1+2+ \ldots +N].
\]
By the same argument as before, the integral coming from the terms $(2 \pi i)^{2}[1+2+ \ldots + N-1]$ is zero around $C_{0}-2N\pi i$, but because of the monodromy of the log term about $-2N\pi i,$ the integral
\[
\frac{(s-2)\Gamma(1-s)}{2\pi i}
\int_{R-2N\pi i - i \varepsilon}^{-2N\pi i +\delta - i \varepsilon}
N(2 \pi i)^{2}(-x)^{s-2}\frac{dx}{x}
\]
{\em{does}} need to be taken into account. Continuing back to the starting point of $C_{R,N}^{\delta, \varepsilon},$ similar terms add to
\[
\frac{(s-2)\Gamma(1-s)}{2 \pi i}
\sum_{n=1}^{N}
\int_{R-2n\pi i - i \varepsilon}^{-2n\pi i +\delta - i \varepsilon}
n(2 \pi i)^{2}(-x)^{s-2}\frac{dx}{x}.
\]

This expression, along with its counterpart from the part of $C_{R,N}^{\delta, \varepsilon}$ with positive imaginary part, is readily computed in the limit as $\delta$ and $\varepsilon$ approach 0 while $R$ tends to $\infty:$ Indeed, by using {\sc Cauchy's} Theorem applied to rectangular contours respectively below and above the real axis, we find that 
\begin{equation}
\label{Z(2)neg}
\lim_{R \rightarrow \infty; \delta, \varepsilon \rightarrow 0}\int_{R-2n\pi i - i \varepsilon}^{-2n\pi i +\delta - i \varepsilon}
n(2 \pi i)^{2}(-x)^{s-2}\frac{dx}{x} = \frac{(2 \pi )^{s}(i)^{s}n^{s-1}}{s-2}
\end{equation}
whereas
\begin{equation}
\label{Z(2)pos}
\lim_{R \rightarrow \infty; \delta, \varepsilon \rightarrow 0}
\int_{2n\pi i +\delta + i \varepsilon}^{R+2n\pi i + i \varepsilon}
n(2 \pi i)^{2}(-x)^{s-2}\frac{dx}{x}
=
-\frac{(2 \pi )^{s}(-i)^{s}n^{s-1}}{s-2}.
\end{equation}
More precisely, taking $\varepsilon \delta \rightarrow 0$, consider the contour $E_{R,n, \kappa,-}$ formed by
\[
R -2 \pi i n \rightarrow -2\pi i n
\rightarrow
-i\kappa
\rightarrow
R-i \kappa
\rightarrow 
R-2 \pi in
\]
where $\kappa>0$ is small. One computes
\[
\int_{-2 \pi in}^{-i \kappa}\frac{(-x)^{s}}{x^{2}}\frac{dx}{x}=
e^{i\frac{\pi}{2}s}\frac{(2 \pi n)^{s-2}}{s-2}
-e^{i \frac{\pi}{2}s}\frac{\kappa^{s-2}}{s-2}
\]
and
\[
\int_{-i\kappa}^{R-i \kappa}\frac{(-x)^{s}}{x^{2}}\frac{dx}{x}=
e^{i \pi s}\frac{(R-i \kappa)^{s-2}}{s-2}
+
e^{i \frac{\pi}{2}s}\frac{\kappa^{s-2}}{s-2}.
\]
while
\[
\left|\int_{R-i \kappa}^{R-2 \pi in}\frac{(-x)^{s}}{x^{2}}\frac{dx}{x}\right|
\leq
K_{E}2 \pi \cdot \max\left\{\left|\frac{(-x)^{s}}{x^{2}}\right|\right\}
\]
where $K_{E}>0$ is constant and the maximum is taken over the straight line segment $R-i \kappa \rightarrow R-2 \pi in$, so that this last integral approaches zero as $R$ grows without bound. 
Adding these integrals in the limit as $R \rightarrow \infty$, we obtain 
\[
(-i^{-2})i^{s}\frac{(2 \pi n)^{s-2}}{s-2}
\]
and by {\sc Cauchy's} Theorem, the integral in (\ref{Z(2)neg}) is the negative of this quantity multiplied by $n(2 \pi i)^{2},$ from which we deduce the equality in (\ref{Z(2)neg}).
A similar computation suffices to show the validity of (\ref{Z(2)pos}).

Adding all such terms of the integral along $C_{R,N}^{\delta, \varepsilon}$ then gives
\begin{eqnarray*}
&&
\frac{(s-2)\Gamma(1-s)}{2\pi i}
\frac{(2 \pi)^{s}[i^{s}-(-i)^{s}]}{s-2}
\sum_{n=1}^{\infty}n^{s-1}
\\
&=&
\Gamma(1-s) 2^{s}\pi^{s-1}\frac{e^{i \frac{\pi}{2}s} - e^{-i\frac{\pi}{2}s}}{2i}\zeta(1-s)
\\
&=&
\Gamma(1-s)2^{s}\pi^{s-1}\sin \left(\frac{\pi s}{2} \right)
\zeta(1-s).
\end{eqnarray*}

Since this term added to $-\zeta(s)$ gives 0, the functional equation follows. 

It remains to be shown that in the limit, the remaining terms approach 0. It is convenient to assume that $2(2N+1) \pi<R.$

We begin with the integral 
\begin{equation}
\label{Liest}
\int_{D_{R,0}}\frac{\Li_{2}(1-e^{x})}{x^{2}}(-x)^{s}\frac{dx}{x}
\end{equation}
where the monodromy terms are ignored. This is most readily approximated by considering separately the two portions of the path $D_{R,0}$ on respective sides of the imaginary axis, say $D_{R,+}$ for the part with non-negative real part and its counterpart $D_{R,-}$ to the left of the imaginary axis.

Now both $D_{R+}$ and $D_{R,-}$ have length
$2R+2(2N+1)\pi <3R$ by the assumption on $N$.

Along $D_{R,+}$, $|\Li_{2}(1-e^{-x})|\leq (\zeta(2)+\Li_{2}(2)\varepsilon_{R})$ where $\varepsilon >0$ approaches 0 as $R \rightarrow \infty$ since $1-e^{-x}$ is close to 0 along $R+i \alpha$ for real $\alpha$, while $\Li_{2}(1-e^{-x})=\Li_{2}(1+e^{-l})$ for positive $l$ along both $R+(2N+1)\pi i \rightarrow (2N+1)\pi i$ and $-(2N+1)\pi i \rightarrow R-(2N+1)\pi i.$ Because the points of $D_{R,+}$ are outside of a circle of radius $R$, also
\[
\left|
\frac{(-x)^{s}}{x^{3}}
\right| 
=
\left|
x
\right|^{\Re s -3} 
\leq R^{\Re s -3}.
\]
Hence, for $C$ and $C'$ denoting positive constants, a bound on the absolute value of the part of (\ref{Liest}) along $D_{R,+}$ is
\[
CR^{\Re s -2}(\zeta(2)+\Li_{2}(2)+\varepsilon_{R})(2R+2(2N+1)\pi)
<
C'R^{\Re s -1} \rightarrow 0
\]
as $R$ tends to $\infty,$ because $\Re s < -2.$

Now on $D_{R,-}$ we again use {\sc Euler} inversion to rewrite the integral as
\begin{equation}
\label{EI2}
\int_{D_{R,-}} \left[ \zeta(2) - \Li_{2}(e^{-x}) + x \log(1-e^{-x}) \right]
\frac{(-x)^{s}}{x^{2}}\frac{dx}{x}.
\end{equation}
Now $\Li_{2}(e^{-x})+\zeta(2)$ may be bounded along $D_{R,-}$ in similar vein to the bound obtained for $\Li_{2}(1-e^{-x})$ along $D_{R,+}$ and it hence follows that the contribution from these first two terms approaches zero as $R \rightarrow 0.$ As for the logarithmic term, notice that for $x \in \lvR,$
\[
\lim_{x\rightarrow -\infty}\frac{\log(1-e^{-x})}{x^{2}} = 0
\]
by {\sc L'H{\^o}pital's} Rule. Consequently, for any given $\eta >0,$ one may choose $R$ sufficiently large so that also
\begin{equation}
\label{logest}
\left|\frac{\log(1-e^{-x})}{x^{2}}\right| < \eta
\end{equation}
for any $x$ on $D_{R,-}.$ Thence, a bound on the third term of (\ref{EI2}) is 
$$
K\eta R^{\Re s+1}(2R+2(2N+1)\pi)<K'\eta R^{\Re s+2}
$$ 
(with positive constants $K$ and $K'$), which tends to 0 as 
$R$ goes to infinity, since we assume that $\Re s<-2.$ 

These estimates also serve to deal with the logarithmic terms coming from the monodromy of the dilogarithm, integrated along the boundary. In particular, it is easily seen that the bound (\ref{logest}) holds along all of $D_{R,0}$ if $\eta$ is chosen suitably. Now the absolute value of the number of such logarithmic terms is at most $N$, so for some positive constants $K''$ and $K'''$, the sum of such integrals is bounded above by
\begin{align*}
N \left|\int_{D_{R,0}} \frac{\log(1-e^{-x})}{x^{2}}(-x)^{s}\frac{dx}{x}\right|
&< 
K''N \eta R^{\Re s}(4R+4(2N+1)\pi)
\\
&<
K'''\eta R^{\Re s +2}
\end{align*}
which as before can be made arbitrarily small.

Next consider the integrals 
\[
\sum_{n=1}^{N}
\int_{\gamma_{n}}
\frac{n2 \pi i\log(1-e^{-x})}{x^{2}}
(-x)^{s}\frac{dx}{x}.
\]
For each $n$, make the change of variables $y=x-2 \pi i n$ to obtain
\begin{align*}
&
\sum_{n=1}^{N}
\int_{\gamma_{0}}
\frac{n2 \pi i\log(1-e^{-y})}{(y+2 \pi i n)^{3}}
(-y-2 \pi i n)^{s}{dy}
\\
&=
\sum_{n=1}^{N}
\int_{\gamma_{0}}
\frac{n2 \pi iy\log(1-e^{-y})}{(y+2 \pi i n)^{3}}
(-y-2 \pi i n)^{s}\frac{dy}{y}.
\end{align*}
Along the real axis, {\sc L'H{\^o}pital's} Rule gives
\[
\lim_{y \rightarrow 0}
y \log(1-e^{-y}) = 0,
\]
so that any point $z$ along $\gamma_{0}$ has 
\[
|z \log(1-e^{-z})| \leq |y_{0}(\log(1-e^{-y_{0}})+2 \pi i)|< \eta'
\]
where $y_{0}$ lies on $\gamma_{0} \cap \lvR_{>0}$ and $\eta'>0$ approaches 0 with  $\delta$. This shows that
\begin{eqnarray*}
&&
\left|\int_{\gamma_{0}}y \log (1-e^{-y}) \frac{(-y-2 \pi in)^{s}}{(y+2 \pi i n)^{3}} \frac{dy}{y}\right|\\
&\leq&
\eta'\left|
\int_{\gamma_{0}}\frac{(-y-2 \pi in)^{s}}{(y+2 \pi i n)^{3}} \frac{dy}{y}\right|\\
&=&
\eta'
(2 \pi n)^{\Re s-3}
\end{eqnarray*}
using the calculus of residues for the last computation. Adding now all such terms along with the similar integrals along loops around $-2 \pi i n$ for $n \geq 0,$ the bound computes to
$\eta' 2(2\pi)^{\Re s -3}\zeta(2-s)$ and this evidently tends to 0 as $\eta'$ does.

Finally we dispense with the terms arising from monodromy of the logarithmic terms, integrated along the outside of the contour. The largest number of such terms at any point along the contour is $N(N+1)$ so that the sum of all such integrals is certainly bounded by
\[
N(N+1)(4R+4(2N+1)\pi)R^{\Re s -2}
<
R^{\Re s +1}
\]
which becomes arbitrarily small as $R$ grows without bound.

\bibliographystyle{alpha}	
\bibliography{bibliog}	

{Sheldon T Joyner, Mathematics Department, Brandeis University, 
415 South St, Waltham, MA 02453}
\\
{\em  email:} {\tt{joyner@brandeis.edu}}

\end{document}